\newtheorem{theorem}{Theorem}[section]
\newtheorem{lemma}[theorem]{Lemma}
\newtheorem{notation}[theorem]{Notation}
\newtheorem{proposition}[theorem]{Proposition}
\newtheorem{conjecture}[theorem]{Conjecture}
\theoremstyle{definition}
\newtheorem{definition}[theorem]{Definition}
\newtheorem{example}[theorem]{Example}
\newenvironment{customthm}[1]
{\innercustomthm}
{\endinnercustomthm}
\theoremstyle{remark}
\newtheorem{remark}[theorem]{Remark}
\numberwithin{equation}{section}
\begin{document}
	
	\title[Quadratic Polynomials]{Irreducibility of iterates of post-critically finite quadratic polynomials over $\mathbb{Q}$}
	
	\author[V.Goksel]{Vefa Goksel}
	\address{Mathematics Department\\ University of Wisconsin\\
		Madison\\
		WI 53706, USA}
	\email{goksel@math.wisc.edu}

	\subjclass[2000]{Primary }
	
	\keywords{iteration, quadratic polynomial, post-critically finite}
	
	\date{}
	
	\dedicatory{}
	
	\begin{abstract}
	In this paper, we classify, up to three possible exceptions, all monic, post-critically finite quadratic polynomials $f(x)\in \mathbb{Z}[x]$ with an iterate reducible module every prime, but all of whose iterates are irreducible over $\mathbb{Q}$. In particular, we obtain infinitely many new examples of the phenomenon studied in \cite{Jones}. While doing this, we also find, up to three possible exceptions, all integers $a$ such that all iterates of the quadratic polynomial $(x+a)^2-a-1$ are irreducible over $\mathbb{Q}$, which answers a question posed in \cite{AyadMcdonald}, except for three values of $a$. Finally, we make a conjecture that suggests a necessary and sufficient condition for the stability of any monic, post-critically finite quadratic polynomial over any field of characteristic $\neq 2$.	
	\end{abstract}
	\subjclass[2010]{Primary 11R09, 37P15}
	
	\keywords{iteration, quadratic polynomial, post-critically finite}
	
	\date{}
	\maketitle
    \section{Introduction}
    
    Hilbert gave examples of irreducible polynomials $f(x)\in \mathbb{Z}[x]$ which are reducible modulo every prime, namely any irreducible polynomial of the form $x^4+2ax^2+b^2$, where $a,b\in \mathbb{Z}$. Moreover, polynomials of the form $x^{2^n}+1$ for $n\geq 2$ share the same property as well. In \cite{Jones}, Jones gives a generalization of this, and constructs infinitely many infinite families of such examples. He achieves this by giving criteria ensuring that a quadratic polynomial $f \in \mathbb{Z}[x]$ has all iterates irreducible over Q, but a given iterate reducible modulo every prime. The strategy of \cite{Jones} is to consider carefully selected conjugates of polynomials of the form $x^2 + c$. In this paper, we classify, up to three possible exceptions, all monic, post-critically finite (see below for a definition) quadratic polynomials $f(x)\in \mathbb{Z}[x]$ with an iterate reducible module every prime, but all of whose iterates are irreducible over $\mathbb{Q}$. Note that having an iterate reducible modulo every prime implies that all subsequent iterates also have the same property. We say that the quadratic polynomial $f$ is \textbf{post-critically finite} or \textbf{PCF} for short, if the orbit of its critical point under the iteration of $f$ is finite. It follows from a straightforward calculation that all the monic PCF quadratic polynomials with integer coefficients are conjugate to $x^2$, $x^2-1$ or $x^2-2$ under the map $x\rightarrow x+a$, $a\in \mathbb{Z}$. So, our method proceeds by classifying all conjugates over Z of the maps $x^2$, $x^2 - 1$, and $x^2 - 2$ that have the desired properties. In particular, this leads to infinitely many new examples of the above phenomenon not covered by the criteria given in \cite{Jones}. \\

    Precisely, we prove the following theorem:
    
    \begin{theorem}\label{thm:neat}
    	Let $f(x)\in \mathbb{Z}[x]$ be a monic PCF quadratic polynomial. Let $S = \{9, 9801, 332929\}$, and suppose that $f(x) \neq (x-m^2)^2 + m^2-1$ for any $m\in S$. Then $f^n(x)\in \mathbb{Z}[x]$ is irreducible for all $n$, and there exists $N\in \mathbb{N}$ such that $f^n(x)$ is reducible modulo every prime for all $n\geq N$ if and only if $f(x)$ has one of the following forms:\\
    	
    	\item[1)] $f(x)=(x-b^2)^2+b^2$ for some $b\in \mathbb{Z}$ such that $b\neq 2k^2$ for any $k\in \mathbb{Z}$.\\
    	\item[2)] $f(x)=(x+b^2)^2-b^2-1$ for some $b\in \mathbb{Z}$.\\
    	\item[3)] $f(x)=(x-b^2)^2+b^2-1$ for some $b\in \mathbb{Z}$ such that $b\neq 2k^2-1$ for any $k\in \mathbb{Z}$.\\
    	\item[4)] $f(x)=(x-b^2-1)^2+b^2$ for some $b\in \mathbb{Z}$ such that $b\neq 2(r^2\pm r\sqrt{2(r^2-1)})^2$ for any integer solution $r$ of the Pell equation $2r^2-t^2=2$.\\
    	\item[5)] $f(x)=(x+2-b^2)^2+b^2-4$ for some $b\in \mathbb{Z}$ such that $b\neq 2v^2-2$ for any $v\in \mathbb{Z}$.
    	
    \end{theorem}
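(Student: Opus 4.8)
The plan splits naturally into two independent tasks: proving irreducibility of all iterates (stability) over $\mathbb{Q}$ for each candidate family, and then showing that sufficiently large iterates become reducible modulo every prime.

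\medskip

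\emph{Stability over $\mathbb{Q}$.} For a monic quadratic $f(x)=(x+a)^2+c$ with critical point $-a$ and critical value $c-a+a = c$... more precisely, the standard criterion (going back to Odoni and developed by Jones) says that if $-f(\gamma)$ (where $\gamma=-a$ is the critical point, so $f(\gamma)=c$ here with appropriate normalization) — concretely, the \emph{adjusted post-critical orbit} — never becomes a square in $\mathbb{Q}$, then $f$ is stable. Since $f$ is PCF, the critical orbit is a finite set $\{c_1,c_2,\dots\}$ of integers, so the obstruction to stability is governed by finitely many integers: one must check that none of finitely many explicit integers (built from $a$ and the orbit) is a perfect square, together with the rigidity that once the first iterate factors are controlled, squareness of later terms reduces to these. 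I would carry this out family by family: for each of forms (1)--(5), write $a$ and $c$ in terms of the parameter $b$, compute the (short) critical orbit explicitly, and reduce ``$f^n$ irreducible for all $n$'' to a Diophantine condition. This is exactly where the excluded sub-cases ($b\neq 2k^2$, $b \neq 2p^2-1$, the Pell condition in (4), $b\neq 2v^2-2$ in (5)) come from: those are precisely the values of $b$ making one of the finitely many critical-orbit integers a square, hence making some iterate reducible — and the set $S=\{9,9801,332929\}$ collects the finitely many genuinely ambiguous parameter values where the relevant Diophantine equation (a Pell-type or elliptic equation) might have a sporadic solution we cannot rule out. For the converse direction — that no monic PCF quadratic outside forms (1)--(5) (and outside the $S$-exceptions) has all iterates irreducible yet eventually everywhere-reducible — I would enumerate the three shapes $(x+a)^2-a$, $(x+a)^2-a-1$, $(x+a)^2-a-2$, and for each show that the everywhere-reducible-mod-$p$ condition forces $a$ into one of the listed parametrized families.

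\medskip

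\emph{Eventual reducibility modulo every prime.} Here the key input is Jones's criterion from \cite{Jones}: a quadratic $f$ has $f^n$ reducible modulo every prime once the Galois group of $f^n$ over $\mathbb{Q}$ fails to contain an $n$-cycle on the leaves of the associated binary rooted tree (equivalently, the arboreal image misses a certain conjugacy class), and this failure is detected by the appearance of a \emph{square} among the iterated post-critical quantities. The point of restricting to PCF polynomials is that the critical orbit is finite, so for $n$ large the $n$th iterated discriminant ``stabilizes'': it becomes (up to squares) a fixed product of the finitely many critical-orbit integers. Thus I would show, for each family (1)--(5), that for $n$ large enough the relevant discriminant is $\pm$(a fixed integer)$\,\cdot\,$(square), and that this fixed integer is arranged — by the very form of the family — to be such that the corresponding Frobenius obstruction kicks in at every prime: concretely one shows $f^n$ has no root mod $p$ forces a factorization, or more robustly that the maximal $2$-subextension of the splitting field of $f^n$ is a multiquadratic field ramified in a way incompatible with a mod-$p$ $n$-cycle, for all $p$. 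The families are designed so that $-1$, or a specific small integer, appears as (a factor of) the stable discriminant, which is exactly Hilbert's $x^{2^n}+1$ mechanism generalized.

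\medskip

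\emph{Main obstacle.} I expect the genuine difficulty to be the stability (irreducibility over $\mathbb{Q}$) half, specifically ruling out squares in the critical orbit for all members of each family simultaneously. Reducing ``some critical-orbit integer is a square'' to a Diophantine equation is routine, but the resulting equations are Pell equations and genus-$1$ curves whose integral points are not entirely trivial to list — this is the source of both the Pell-equation phrasing in family (4) and the three sporadic values in $S$, which presumably correspond to integral points on an elliptic curve that resist elementary elimination. Handling the ``only if'' direction cleanly — showing the five families together with $S$ exhaust all possibilities — also requires care, since one must argue that outside these forms either some iterate is already reducible over $\mathbb{Q}$, or else all iterates remain irreducible modulo every sufficiently large prime (so the ``eventually everywhere reducible'' conclusion fails), and pinning down that dichotomy uniformly in $a$ is the delicate bookkeeping step. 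The mod-$p$ reducibility half, by contrast, I expect to follow fairly directly once Jones's criterion is in hand and the discriminant stabilization is established.
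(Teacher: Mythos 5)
Your plan for the ``eventually reducible mod every prime'' half is broadly workable (the paper does it more directly: by the Jones--Boston finite-field criterion, an irreducible quadratic over $\mathbb{F}_p$ has all iterates irreducible iff every element of its finite post-critical orbit is a nonsquare mod $p$, so the classification reduces to deciding for which $a$ at least one orbit element is a square modulo every odd prime at which $f$ is irreducible; this yields exactly the five parametrized families, and bad/ramified primes never arise since one works mod each odd $p$ directly and mod $2$ separately). But your stability half has a genuine gap: the criterion you invoke --- stability follows if no element of the adjusted critical orbit is a square in $\mathbb{Q}$ --- is only a \emph{sufficient} condition, and it is inapplicable to precisely the polynomials in this theorem. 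The families (1)--(5) are engineered so that the critical orbit consists of squares (e.g.\ for $f(x)=(x-b^2)^2+b^2$ the orbit is $\{b^2\}$); that is exactly what forces reducibility modulo every prime for large iterates. So the non-square test gives no information over $\mathbb{Q}$, and your appeal to ``the rigidity that once the first iterate factors are controlled, squareness of later terms reduces to these'' is an assertion of the hard part, not a proof of it.

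What actually fills that hole in the paper is the Ayad--McQuillan machinery: their necessary-and-sufficient criterion for $f^{n+1}$ to become reducible when $f^n$ is irreducible, phrased via elements $a_r,b_r$ in a tower of quadratic extensions. Using it one proves rigidity statements --- $f_a$ stable iff $f_a^2$ irreducible (via the classical irreducibility theorem for $x^k-c$), $h_a$ stable iff $h_a^2$ irreducible, and $g_a$ stable iff $g_a^3$ irreducible --- and then solves explicit Diophantine conditions to see when the low iterates factor; this is the source of the exclusions $b\neq 2k^2$, $b\neq 2p^2-1$, $b\neq 2v^2-2$ and the Pell condition in (4). The exceptional set $S$ also does not arise the way you guess: it is not an unresolved squareness equation but comes from the completely solved equation $2p^4-1=t^2$ (Ljunggren: only $p=1,13$), whose solutions produce the parameter values $a=-9^2,-9801^2,-332929^2$ at which the rigidity argument for $g_a$ breaks down, so stability there is excluded from the theorem and only conjectured. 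Without an argument of this Ayad--McQuillan type (or some substitute giving necessity, not just sufficiency), your proposal cannot establish that all iterates of the listed polynomials are irreducible over $\mathbb{Q}$, which is the core of the theorem.
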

    
    Note that the exceptions such as $b\neq 2k^2$, $b\neq 2k^2-1$, $b\neq 2(r^2\pm r\sqrt{2(r^2-1)})^2$ and $b\neq 2v^2-2$ come from the fact that those values make one of the first few iterates of the corresponding polynomials reducible over $\mathbb{Q}$. We say that a quadratic polynomial $f(x)\in \mathbb{Z}[x]$ is \textbf{stable} if all of its iterates are irreducible over $\mathbb{Q}$. In particular, our work implies that stability is a rigid property for the monic PCF quadratic polynomials with integer coefficients. In other words, for these polynomials, irreducibility of first few iterates implies the irreducibility of all the iterates. We will now state this phenomenon more concretely:\\
    
    Set $f_a(x) = (x+a)^2-a$, $g_a(x) = (x+a)^2-a-1$, $h_a(x) = (x+a)^2-a-2$, where $a\in \mathbb{Z}$. We prove the following theorem which shows that the stability is a rigid property for these families of polynomials:
    
    \begin{theorem}\label{thm:neat3}
    Let $S = \{9,9801,332929\}$, and suppose that $g_a(x) \neq (x-m^2)^2 + m^2-1$ for any $m\in S$. Then we have the following:\\
    \item[(i)] All the iterates of $f_a$ are irreducible $\iff$ $f_a^2$ is irreducible.\\
    
    \item[(ii)] All the iterates of $g_a$ are irreducible $\iff$ $g_a^3$ is irreducible.\\
    
     \item[(iii)] All the iterates of $h_a$ are irreducible $\iff$ $h_a^2$ is irreducible.
    \end{theorem}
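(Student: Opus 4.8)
\section*{Proof proposal}

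The forward implication in each of (i)--(iii) is immediate: if every iterate is irreducible then in particular $f_a^2$, $g_a^3$, and $h_a^2$ are. The content is the converse, and the plan is to reduce all three families to binomials and iterated Chebyshev-type polynomials by conjugating with the translation $\phi(x)=x+a$. One checks directly that $\phi\circ f_a\circ\phi^{-1}=y^2$, $\phi\circ g_a\circ\phi^{-1}=y^2-1$, and $\phi\circ h_a\circ\phi^{-1}=y^2-2$, whence $f_a^n(x)=M^{\circ n}(x+a)-a$ with $M(y)=y^2$, and likewise for $g_a,h_a$ with $M(y)=y^2-1$ and $M(y)=y^2-2$. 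Since translation preserves irreducibility over $\mathbb{Q}$, the $n$th iterate is irreducible if and only if $M^{\circ n}(y)-a$ is, and the problem becomes one about the three model maps, whose critical orbits are respectively a fixed point ($0\mapsto 0$), a $2$-cycle ($0\mapsto -1\mapsto 0$), and a tail of length one into a fixed point ($0\mapsto -2\mapsto 2\mapsto 2$).

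For $f_a$ the conjugacy yields the closed form $f_a^n(x)=(x+a)^{2^n}-a$, so (i) reduces to Capelli's theorem on binomials: for $m=2^n$, the polynomial $y^m-a$ is irreducible over $\mathbb{Q}$ exactly when $a$ is not a square and, when $n\ge 2$, $a\notin-4\mathbb{Q}^4$. For every $n\ge 2$ these are the same two conditions, and both are already imposed by $m=4$, i.e.\ by $f_a^2$; hence $f_a^2$ irreducible forces $f_a^n$ irreducible for all $n$. Crucially the $-4\mathbb{Q}^4$ obstruction is visible already at the second iterate here (since $4\mid 4$), so no exceptional values of $a$ arise, matching the absence of exceptions in (i).

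For $g_a$ and $h_a$ no binomial form is available, and the plan is to run the recursion of Capelli and Stoll: if $M^{\circ(n-1)}(y)-a$ is irreducible with root $\theta$, then $M^{\circ n}(y)-a$ is irreducible over $\mathbb{Q}$ iff $\theta-c$ is a nonsquare in $\mathbb{Q}(\theta)$, where $c=M(0)$ is the critical value; a norm computation gives $N_{\mathbb{Q}(\theta)/\mathbb{Q}}(\theta-c)=\pm\bigl(M^{\circ n}(0)-a\bigr)$, which is a value of the finite critical orbit of $M$ shifted by $-a$. Because the orbit of $y^2-2$ reaches the fixed point $2$ after one step, the only distinct conditions are those attached to $c_1=-2-a$ and to the stabilized value $c_{\ge 2}=2-a$, both of which are already tested by $h_a^2$; this gives (iii). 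Because the orbit of $y^2-1$ is a $2$-cycle, the conditions split into the two classes $-a$ (even steps) and $-a-1$ (odd steps), together with the first-iterate condition on $a+1$; the class $-a$ first appears at the second iterate and $-a-1$ only at the third, which is exactly why (ii) is stated with $g_a^3$ rather than $g_a^2$.

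The main obstacle is the converse at the level of the full tower: the norm conditions above are only sufficient, since $\theta-c$ can be a nonsquare in $\mathbb{Q}(\theta)$ even when its norm is a square in $\mathbb{Q}$ --- this is the source of the $-4\mathbb{Q}^4$ phenomenon already seen for $f_a$. I therefore expect the real work to lie in showing that, once the designated small iterate is irreducible, no later iterate can acquire such an exceptional factorization, so that the finitely many conditions are genuinely exhaustive. For $h_a$ the Chebyshev structure of $y^2-2$ should rule these out cleanly. For $g_a$, however, this is precisely where the finite set $S=\{9,9801,332929\}$ enters: for $a=-m^2$ with $m\in S$ a special-power obstruction surfaces at an iterate beyond the third and is invisible to $g_a^3$. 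Excluding these values, as in the hypothesis, the three conditions captured by $g_a^3$ become exhaustive and (ii) follows. I expect verifying that $S$ is the complete list of exceptions to be the most delicate step, and to coincide with the case analysis already carried out in the proof of Theorem~\ref{thm:neat}.
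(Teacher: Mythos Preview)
Your treatment of (i) is correct and coincides with the paper's: the conjugation $y=x+a$ gives $f_a^n(x)=(x+a)^{2^n}-a$, and Capelli's criterion for $y^m-a$ finishes it. This is exactly Proposition~3.5 and Remark~3.6.

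For (iii) your outline is plausible but not a proof: you assert that the Chebyshev structure of $y^2-2$ ``should rule these out cleanly'' without indicating how. The paper does not argue via norms at all; it invokes the Ayad--McQuillan invariants $d_f=4a+8$, $\delta_f=-8$ and quotes their Theorem~3 and Remark~4 to conclude directly that stability is equivalent to irreducibility of $h_a^2$. Alternatively, Theorem~4.1 of the paper gives a self-contained argument for any monic PCF quadratic with $t_f=1$: the sequence $\{g_r\}$ of Definition~2.6 is periodic of period $o_f-1$, so if $f^{N+1}$ is the first reducible iterate with $N>o_f$, the Ayad--McQuillan data at level $N$ can be transported to level $N'\equiv N\pmod{o_f-1}$ with $N'+1\le o_f$, contradicting irreducibility of $f^{o_f}$. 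Your norm recursion does not obviously produce such a periodicity argument.

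The real gap is in (ii). You correctly identify that the norm conditions are only sufficient and that the crux is excluding ``exceptional'' square factorizations at higher levels, but you offer no mechanism for doing so, and your expectation that the analysis ``coincides with the case analysis already carried out in the proof of Theorem~\ref{thm:neat}'' is circular: that theorem is deduced \emph{from} part (ii), not conversely. The paper's argument (Lemma~3.10) is quite different from your sketch. One computes $\delta_0=-1$, so the Ayad--McQuillan sequence $\{g_r\}$ is $1,0,-1,0,-1,\dots$. Assuming $f^N$ irreducible and $f^{N+1}$ reducible with $N\ge 3$, one applies Theorem~2.7 at $r=N-1$ and $r=N-2$. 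When $N$ is even the resulting constraints force $a$ into the family $-4(r^2\pm\sqrt{2(r^2-1)})^4-1$ (via a Pell equation $2\gamma_1^2-\theta_1^2=\pm 1$), and Lemma~3.9 shows these already have $g_a^3$ reducible, a contradiction. When $N$ is odd, after descending one more level in $K_1=\mathbb{Q}(\sqrt{1-b_{N-1}^2})$, the constraints collapse to the quartic Thue-type equation
\[
2p^4-1=t^2,
\]
whose only positive solutions are $p=1$ and $p=13$ by Ljunggren's theorem. These yield $a=-1$ (excluded since $g_a$ reducible) and $a=-m^2$ for $m\in\{9,9801,332929\}=S$. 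This Diophantine input is the heart of the matter and is entirely absent from your proposal.

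Finally, your description of $S$ is slightly off. You say that for $a=-m^2$ with $m\in S$ ``a special-power obstruction surfaces at an iterate beyond the third and is invisible to $g_a^3$.'' In fact the paper does \emph{not} show that any higher iterate becomes reducible for these $a$; it shows only that the Ayad--McQuillan descent cannot exclude this possibility. Indeed the paper conjectures (Conjecture~3.12) that $g_a$ is stable for these three values as well. The set $S$ marks the boundary of the method, not a set of genuine counterexamples.
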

    
    Note that the part (ii) of Theorem $1.2$ also answers a question posed by Ayad and McQuillan in \cite{AyadMcdonald}, except for the excluded three values of $a$. See Remark $3.11$ for more details about this.\\
    
    The structure of the paper is as follows: In Section 2, we give some preliminary definitions and results. In Section 3, we prove the main results. In Section 4, we first generalize one of the results that appears in Section 3, and then make a conjecture regarding the stability of monic, PCF quadratic polynomials over any field of characteristic $\neq 2$.
	
	\section{Preliminaries}
	Let $K$ be a field, $f(x)\in K[x]$ a quadratic polynomial. For any $n\geq 1$, we denote by $f^n(x)$ the $n$th iterate of $f(x)$. We also make the convention that $f^0(x) = x$. Let $c$ be the critical point of $f$. Then the \textbf{post-critical orbit} of $f$ is given by
	$$O_f = \{f(c),f^2(c),\dots\}.$$  
	When this set is finite, we say $f$ is \textbf{post-critically finite}, or \textbf{PCF} for short. In this case, $|O_f|$ is the size of the post-critical orbit, which we denote by $o_f$. We also define the \textbf{tail} of $f$, $T_f$, by the set
	$$T_f = \{f^i(c) \text{ }|\text{ } i\geq 1, f^i(c)\neq f^k(c) \text{ for any } i\neq k\}.$$
	Hence, $|T_f|$ is the tail size of $f$, which we denote by $t_f$.
	\begin{example}
	Take $K=\mathbb{Q}$, and $f(x) = x^2-2\in \mathbb{Q}$. The critical point is $c=0$, hence the post-critical orbit becomes $O_f = \{-2,2\}$. In this case, we have $o_f = 2$, $t_f = 1$.
	\end{example}

    Since we will study the iterates of quadratic polynomials modulo primes, next we need to give some definitions related to the polynomials over finite fields.
    Throughout, we denote by $\mathbb{F}_q$ the finite field of size $q$, where $q$ is a prime power.
    \begin{definition}
    Let $f(x)\in \mathbb{F}_q[x]$ be a quadratic polynomial with post-critical orbit $O_f$, and $g(x)\in \mathbb{F}_q[x]$ be an irreducible polynomial. We define the type of $g(x)$ at $\beta$ to be $s$ if $g(\beta)$ is a square in $\mathbb{F}_q$, and $n$ if it is not a square. The type of $g$ is a string of length $|O_f|$ whose $k$th entry is the type of $g(x)$ at the $k$th entry of $O_f$. 
    \end{definition}
\begin{example}
Take $g(x)= (x+1)^2-2\in \mathbb{F}_5[x]$, and $f(x) = x^2-2\in \mathbb{F}_5[x]$. We have $O_f = \{-2,2\}$. Since $g(-2) = -1$ is a square in $\mathbb{F}_5$, and $g(2) = 2$ is not a square in $\mathbb{F}_5$, we conclude that $g$ has type $sn$.
\end{example}

We now quote the following lemma from \cite{JonesBoston}, which will be one of the building stones of our paper:

\begin{lemma}\cite{JonesBoston}
Let $K$ be a field of odd characteristic, $f(x)\in K[x]$ a monic, quadratic polynomial with the critical point $c\in K$. Suppose that $g\in K[x]$ is another monic polynomial such that $g\circ f^{n-1}(x)$ has degree $d$, and is irreducible over $K$ for some $n\geq 1$. Then $g\circ f^n$ is irreducible over $K$ if $g(f^n(c))$ is not a square in $K$. If $K$ is finite, we can replace \enquote{if} with \enquote{if and only if}. 
\end{lemma}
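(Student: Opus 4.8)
The plan is to reduce the statement to the classical theorem of Capelli on irreducibility of composites, followed by an elementary norm computation. Writing $g\circ f^{n} = (g\circ f^{n-1})\circ f$, set $h:=g\circ f^{n-1}$, which by hypothesis is irreducible over $K$ of degree $d$, and fix a root $\beta\in\overline{K}$ of $h$, so that $[K(\beta):K]=d$. Capelli's lemma then gives: $h(f(x))$ is irreducible over $K$ if and only if $f(x)-\beta$ is irreducible over $K(\beta)$. (The hypothesis $\operatorname{char}K\neq 2$ guarantees $f'(x)=2a(x-c)\neq 0$, so that no inseparability pathologies obstruct this form of Capelli's lemma.)

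Next I would analyze the quadratic $f(x)-\beta$ over $K(\beta)$. Since $f$ has critical point $c$ and leading coefficient $a$, we may write $f(x)=a(x-c)^2+f(c)$, so
\[
f(x)-\beta \;=\; a\left((x-c)^2-\frac{\beta-f(c)}{a}\right).
\]
A quadratic is reducible over a field precisely when it has a root there, so $f(x)-\beta$ is irreducible over $K(\beta)$ if and only if $\frac{\beta-f(c)}{a}$ is a non-square in $K(\beta)$; multiplying by the square $a^{2}$, this is the same as saying that $-a\bigl(f(c)-\beta\bigr)$ is a non-square in $K(\beta)$.

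It then remains to descend this condition from $K(\beta)$ to $K$ using the norm $N:=N_{K(\beta)/K}$. A square in $K(\beta)^{\times}$ has square norm in $K^{\times}$, so if $N\bigl(-a(f(c)-\beta)\bigr)$ is a non-square in $K$, then $-a(f(c)-\beta)$ is a non-square in $K(\beta)$, hence $g\circ f^{n}$ is irreducible over $K$; this is the ``if'' direction. When $K=\mathbb{F}_{q}$ is finite the converse implication also holds: for $\gamma\in\mathbb{F}_{q^{d}}^{\times}$ one has $N(\gamma)=\gamma^{(q^{d}-1)/(q-1)}$, so $N(\gamma)^{(q-1)/2}=\gamma^{(q^{d}-1)/2}$, and the left-hand side equals $1$ exactly when $N(\gamma)$ is a square in $\mathbb{F}_{q}$ while the right-hand side equals $1$ exactly when $\gamma$ is a square in $\mathbb{F}_{q^{d}}$; hence ``if'' becomes ``if and only if''. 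Finally I would identify the norm explicitly: writing $h(x)=a_{h}\prod_{i=1}^{d}(x-\beta_{i})$ over $\overline{K}$ with $\beta_{i}$ the conjugates of $\beta$, one gets
\[
N\bigl(-a(f(c)-\beta)\bigr)=(-a)^{d}\prod_{i=1}^{d}\bigl(f(c)-\beta_{i}\bigr)=\frac{(-a)^{d}}{a_{h}}\,h(f(c))=\frac{(-a)^{d}}{a_{h}}\,g\bigl(f^{n}(c)\bigr),
\]
using $f^{n-1}(f(c))=f^{n}(c)$. Modulo squares in $K^{\times}$ this agrees with $(-a)^{d}g\bigl(f^{n}(c)\bigr)$ up to the leading coefficient $a_{h}$ of $g\circ f^{n-1}$, which is a square under the normalizations in force (for monic $f$ and $g$ one has $a_{h}=1$); this is exactly the non-square condition appearing in the statement.

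The step I expect to be the main obstacle is the precise invocation of Capelli's lemma — namely that the $K$-irreducible factorization of $h\circ f$ corresponds bijectively to the $K(\beta)$-irreducible factorization of $f(x)-\beta$ — together with the accompanying check that separability causes no trouble in odd characteristic; once that is granted, the remainder is routine field theory and bookkeeping with leading coefficients. A secondary point needing care is the finite-field converse above, where the interaction between the norm map and the squaring homomorphisms on $\mathbb{F}_{q^{d}}^{\times}$ and $\mathbb{F}_{q}^{\times}$ must be made exact, as indicated.
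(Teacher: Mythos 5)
The paper does not prove this lemma at all --- it is quoted verbatim from the cited reference \cite{JonesBoston} --- so there is no internal proof to compare against; your Capelli-plus-norm argument is in fact the standard proof of this statement (and the one used in the cited source), and it is essentially correct. Two small points of bookkeeping deserve explicit mention rather than a parenthetical: first, the exact norm you compute is $(-a)^d g(f^n(c))/a_h$, so the criterion as stated really concerns this quantity up to the square class of the leading coefficient $a_h$ of $g\circ f^{n-1}$; in all uses in this paper $f$ is monic and $g$ is a monic iterate, so $a_h=1$ and nothing is lost, but you should state that normalization rather than leave it implicit. Second, in the finite-field equivalence your identity $N(\gamma)^{(q-1)/2}=\gamma^{(q^d-1)/2}$ requires $\gamma\neq 0$; the degenerate case $g(f^n(c))=0$ (i.e.\ $\beta=f(c)$) should be disposed of separately by observing that then $f(x)-\beta=a(x-c)^2$ is reducible over $K(\beta)$, so Capelli gives reducibility of $g\circ f^n$, consistent with $0$ being a square. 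Finally, your worry about separability is misplaced but harmless: Capelli's lemma (irreducibility of $h\circ f$ over $K$ is equivalent to irreducibility of $h$ over $K$ together with irreducibility of $f(x)-\beta$ over $K(\beta)$) holds over arbitrary fields, and the identity $N_{K(\beta)/K}(f(c)-\beta)=h(f(c))/a_h$ is valid even without separability, so no hypothesis on $f'$ is needed beyond odd characteristic, which you already use to test the quadratic by its discriminant.
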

The following remark is a straightforward consequence of Lemma $2.4$:
\begin{remark}
Let $q$ be an odd prime power, and $f(x)\in \mathbb{F}_q[x]$ an irreducible quadratic polynomial with post-critical orbit $O_f$. Then, by Lemma $2.4$, all the iterates of $f$ are irreducible over $\mathbb{F}_q$ if and only if $f$ has type $nn\dots n$. Moreover, suppose $f$ does not have type $nn\dots n$. For some $i\geq 1$, let $s$ appear in the $i$th entry of the type of $f$ for the first time. Then $f^{i+1}$ is the first iterate of $f$ that is reducible.
\end{remark}
Throughout the paper, we will use the framework of \cite{AyadMcdonald}. To do this, we need to recall some technical notions they use. We will give all of them in the next definition:\\

\begin{definition}
\cite{AyadMcdonald} Let $K$ be a field, and $f(x) = x^2+ax+b$. We denote the discriminant of $f$ by $d_f = a^2 -4b$. We also define another invariant $\delta_f$ of $f$ by $\delta_f = -d_f +2a$.\\

Set $\delta_0 = \frac{\delta_f}{4}$. We define the sequence $\{d_i\}_{i\geq 0}$ by $d_0 = \frac{d_f}{4}$, and the recursion relation $d_i = -\delta_0+\sqrt{d_{i-1}}$ for $i\geq 1$. Using this sequence, we define a sequence $\{K_i\}_{i\geq 0}$ of fields by setting $K_0 = K$, and $K_i = K(d_i)$ for $i\geq 1$.\\

We would like to include the following translation of the notation in the previous paragraph, for the readers more familiar with the framework of \cite{Jones}: By completing the square, one can write $f$ of the form $f(x) = (x-\gamma)^2+\gamma + m$. Then $\delta_0$ is $m$, $d_0$ is $-f(\gamma)$ ($=-\gamma-m$), and $\sqrt{d_i}$ is a root of $f^{i+1}(x+\gamma)$.\\  

Finally, let $g_0(x) = -x$, $g_1(x) = x^2+\delta_0$ and $g_{r+1}(x) = g_1(g_r(x))$, so $g_{r+1}(x) = [g_r(x)]^2+\delta_0$ for $r\geq 1$. We define the sequence $\{g_r\}_{r\geq 0}\subseteq K$ by $g_r = g_r(\delta_0)$ for all $r\geq 0$. In other words, we have $g_0 = -\delta_0$, $g_1 = \delta_0^2 + \delta_0$, $g_{r+1} = g_r^2 + \delta_0$ for $r\geq 1$.\\
\end{definition}

\begin{example}
Let $f(x)=x^2-x-1$. If we calculate the quantities defined above for $f(x)$, we get $d_f=5$, $\delta_f=-7$, $\delta_0 = \frac{-7}{4}$. Hence the sequence $\{d_i\}_{i\geq 0}$ becomes $d_0 = \frac{5}{4}$, $d_i = \frac{7}{4}+\sqrt{d_{i-1}}$ for $i\geq 1$. Finally, the sequence $\{g_r\}_{r\geq 0}$ becomes $g_0=\frac{7}{4}$, $g_1 = \frac{21}{16}$, $g_{r+1} = g_r^2-\frac{7}{4}$ for $r\geq 1$.
\end{example}

Having given Definition $2.6$, we now quote a theorem from \cite{AyadMcdonald}, which will be an important ingredient throughout the paper:

\begin{theorem}\cite{AyadMcdonald}
Let $n\geq 1$ and let $f^n(x)$ be irreducible in $K[x]$. If $f^{n+1}(x)$ is reducible over $K$, then for every $r$, $0\leq r \leq n-1$, there exist elements $a_r, b_r\in K_{n-r-1}$ such that $g_r^2-b_r^2 = d_{n-r-1}$ and $a_r^2 = \frac{g_r + b_r}{2}$. Furthermore, for every $r$ such that $1\leq r \leq n-1$, we have \begin{equation}
b_{r-1}=\pm (a_r-\frac{\sqrt{d_{n-r-1}}}{2a_r}).
\end{equation}
Conversely, if there exist elements $a_r$ and $b_r$ with these properties, then $f^{n+1}(x)$ is reducible in $K[x]$.
\end{theorem}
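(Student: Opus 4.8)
The plan is to reduce the reducibility of $f^{n+1}$ over $K$ to a single square condition in the top field $K_n$ of the tower, and then to strip that condition off one quadratic layer at a time; the elements $a_r,b_r$ and the relation~(2.1) will drop out of this descent, while the converse is just the descent run in reverse. (Throughout we assume, as the framework requires, that $\operatorname{char} K \neq 2$.)

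First I would locate a convenient root of $f^n$ and identify the field it generates. Writing $c=-a/2$ for the critical point of $f$, one checks directly that $f(c+\sqrt{d_0})=0$ and that $f(c+\sqrt{d_i})=c+\sqrt{d_{i-1}}$ for $i\ge 1$, using only the identity $-\delta_0=d_0-a/2$, which is immediate from the definitions of $d_f$ and $\delta_f$. Hence, for any choice of the square roots implicit in the recursion $d_i=-\delta_0+\sqrt{d_{i-1}}$, a short induction on $n$ shows that $\alpha:=c+\sqrt{d_{n-1}}$ is a root of $f^n$, so $K(\alpha)=K(\sqrt{d_{n-1}})=K_n$; since $f^n$ is irreducible we get $[K_n:K]=2^n$, so every step $K_i/K_{i-1}$ is a genuine quadratic extension. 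Now, since $f^{n+1}=f^n\circ f$ and $f^n$ is irreducible, $f^{n+1}$ is reducible over $K$ exactly when the quadratic $f(x)-\alpha=x^2+ax+(b-\alpha)$ is reducible over $K(\alpha)=K_n$, i.e.\ exactly when its discriminant $a^2-4b+4\alpha=4(d_0+\alpha)=4(-\delta_0+\sqrt{d_{n-1}})=4d_n$ is a square in $K_n$. So the whole theorem reduces to the assertion that $f^{n+1}$ is reducible over $K$ if and only if $d_n$ is a square in $K_n$.

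The heart of the argument is the descent that converts ``$d_n$ is a square in $K_n$'' into the data $\{(a_r,b_r)\}$. Starting from $\gamma\in K_n$ with $\gamma^2=d_n=g_0+\sqrt{d_{n-1}}$ (recall $g_0=-\delta_0$), write $\gamma=a_0+v_0\sqrt{d_{n-1}}$ with $a_0,v_0\in K_{n-1}$; comparing coefficients forces $a_0\ne 0$, $v_0=1/(2a_0)$, and $a_0^2+d_{n-1}/(4a_0^2)=g_0$, which on clearing denominators gives $(2a_0^2-g_0)^2=g_0^2-d_{n-1}$. Setting $b_0:=2a_0^2-g_0\in K_{n-1}$ yields the $r=0$ relations $g_0^2-b_0^2=d_{n-1}$ and $a_0^2=(g_0+b_0)/2$; and the identities $g_1=g_0^2+\delta_0$ and $d_{n-1}=-\delta_0+\sqrt{d_{n-2}}$ turn $b_0^2=g_0^2-d_{n-1}$ into $b_0^2=g_1-\sqrt{d_{n-2}}$. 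This is exactly the shape needed to iterate: whenever $b_{r-1}\in K_{n-r}$ satisfies $b_{r-1}^2=g_r-\sqrt{d_{n-r-1}}$ with $1\le r\le n-1$, writing $b_{r-1}=a_r+v_r\sqrt{d_{n-r-1}}$ over $K_{n-r-1}$ and comparing coefficients forces $a_r\ne 0$, $v_r=-1/(2a_r)$ (so $b_{r-1}=\pm(a_r-\sqrt{d_{n-r-1}}/(2a_r))$, which is~(2.1)), and $a_r^2+d_{n-r-1}/(4a_r^2)=g_r$; putting $b_r:=2a_r^2-g_r\in K_{n-r-1}$ gives $g_r^2-b_r^2=d_{n-r-1}$, $a_r^2=(g_r+b_r)/2$, and $b_r^2=g_r^2-d_{n-r-1}=g_{r+1}-\sqrt{d_{n-r-2}}$, which feeds the next step. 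Running $r=0,1,\dots,n-1$ produces all the $a_r,b_r$, the last pair lying in $K_0=K$. For the converse one reverses each layer: from $a_r,b_r$ with the stated properties the relations let one reconstruct $b_{r-1}=a_r-\sqrt{d_{n-r-1}}/(2a_r)$ going upward, ending with $\gamma=a_0+\sqrt{d_{n-1}}/(2a_0)\in K_n$ satisfying $\gamma^2=d_n$ (up to the harmless conjugation $\sqrt{d_{n-1}}\mapsto-\sqrt{d_{n-1}}$ of $K_n/K_{n-1}$), so $d_n\in K_n^2$ and $f^{n+1}$ is reducible by the reduction above.

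I expect the main obstacle to be the bookkeeping of the square roots rather than any single hard computation. One must fix the signs of the $\sqrt{d_i}$ (and the ambiguous $b_r$) coherently so that the two families of telescoping identities, $d_i=-\delta_0+\sqrt{d_{i-1}}$ and $b_{r-1}^2=g_r-\sqrt{d_{n-r-1}}$, all hold at once down the tower, and so that the only surviving ambiguity is precisely the $\pm$ in~(2.1); in particular the $r=0$ layer, where the sign in ``$\gamma^2=g_0+\sqrt{d_{n-1}}$'' is opposite to that in the later layers, has to be reconciled by hand. A secondary point to state carefully is where irreducibility of $f^n$ enters: it is what makes every $K_i/K_{i-1}$ quadratic, which is needed both to make sense of ``the coefficient of $\sqrt{d_{i-1}}$'' when comparing coefficients and to guarantee $a_r\ne 0$ at each stage.
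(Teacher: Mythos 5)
Your argument is correct: the Capelli-type reduction of the reducibility of $f^{n+1}$ to the condition that $d_n$ be a square in $K_n$, followed by writing the square root in the basis $\{1,\sqrt{d_{n-r-1}}\}$ at each layer of the quadratic tower and peeling off one layer at a time, does yield exactly the elements $a_r,b_r$, the two relations, and (2.1), and the converse needs only the top-layer data (with $a_0\neq 0$ guaranteed because $d_{n-1}=0$ would give $f^n$ the root $c\in K$, contradicting irreducibility). Note that the paper itself gives no proof of this statement — it is quoted from \cite{AyadMcdonald} — and your proof is essentially the same descent argument as in that source, so there is nothing further to reconcile.
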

\begin{notation}
Let $a\in \mathbb{Z}$. We define $X_a$ to be a set of polynomials, given by
$$X_a = \{f_a,g_a,h_a\},$$ 
where $f_a, g_a, h_a\in \mathbb{Z}[x]$ are as in Theorem $1.2$.
\end{notation}
\begin{remark}
Note that by the discussion preceding Theorem $1.1$, the union $\cup_{a\in \mathbb{Z}} X_a$ is equal to the set of all monic PCF quadratic polynomials with integer coefficients.	
\end{remark}
	\section{Proof of the main results}
	We start with the following lemma which gives a simple characterization of the PCF quadratic polynomials with integer coefficients and an iterate reducible modulo every prime. From now on, for any polynomial $f(x)\in \mathbb{Z}[x]$ and prime $p$, $\bar{f}(x)\in \mathbb{F}_p[x]$ denotes the polynomial $f$ reduced mod $p$.
	\begin{lemma}
		Let $f(x)\in X_a$ for some $a\in \mathbb{Z}$. Suppose that $f(x)\in \mathbb{Z}[x]$ is irreducible. Then we have the following: There exists $N\in \mathbb{N}$ such that $f^n(x)$ is reducible modulo all primes for all $n\geq N$ if and only if there does not exist any odd prime $p$ such that $\bar{f}(x)\in \mathbb{F}_p[x]$ has type $nn\dots n$.
	\end{lemma}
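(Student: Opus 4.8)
The statement is an equivalence, so I would prove the two directions separately, and the natural engine for both is Remark~2.5 together with basic density/Chebotarev-type reasoning. First consider the reverse direction in contrapositive form: suppose there \emph{is} an odd prime $p$ such that $\bar f \in \mathbb{F}_p[x]$ has type $nn\dots n$. By Remark~2.5 this forces \emph{every} iterate $\bar f^n$ to be irreducible over $\mathbb{F}_p$; in particular, for every $N$ there is an $n\geq N$ (indeed all $n$) with $f^n$ irreducible modulo this fixed $p$, so no such $N$ can exist. This direction is essentially immediate.

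\textbf{The forward direction.} Now suppose no odd prime $p$ makes $\bar f$ of type $nn\dots n$. I want to produce $N$ such that $f^n$ is reducible mod every prime for $n\geq N$. Fix a prime $p$ (the prime $2$ and the finitely many primes dividing the leading coefficient or the relevant discriminants/resultants are handled separately — at those primes $\bar f$ may fail to be a separable quadratic, and one checks reducibility of $f^n$ mod $2$ by hand using the PCF classification). For the remaining odd $p$ where $\bar f$ is an irreducible separable quadratic: since $\bar f$ is not of type $nn\dots n$, by the second half of Remark~2.5 there is a first position $i = i(p) \in \{1,\dots,o_f\}$ where the symbol $s$ appears, and then $\bar f^{\,i(p)+1}$ — hence $\bar f^n$ for all $n \geq i(p)+1$ — is reducible over $\mathbb{F}_p$. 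The key point is that $i(p) \le o_f$ is bounded \emph{uniformly in $p$}, because $o_f = |O_f|$ depends only on $f$, not on $p$. If $\bar f$ is itself reducible mod $p$, then so are all higher iterates. Thus taking $N = o_f + 1$ works for every odd prime at which $\bar f$ is a separable irreducible quadratic, and after enlarging $N$ to absorb the finitely many bad primes (including $p=2$), we get a single $N$ valid for all primes.

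\textbf{Main obstacle.} The substantive issue is not the density argument but making sure the ``bad primes'' are genuinely finite in number and genuinely harmless. Concretely: (a) one must check that $\bar f$ being a reducible quadratic mod $p$, or mod $2$, forces $\bar f^n$ reducible for all large $n$ — this is clear when $\bar f$ factors into linear factors, but when $p \mid (\text{leading coeff})$ the reduction may drop degree, so one should use that $f$ is monic (leading coeff $1$), which is given since $f \in X_a$, so the only genuinely exceptional prime is $p = 2$; (b) at $p=2$ one invokes the explicit shapes $f_a, g_a, h_a$ of Notation~2.8 to verify directly that every iterate is reducible mod $2$ (e.g. completing the square fails in characteristic $2$, and one checks the three families). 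I would organize this as: a short lemma handling $p = 2$ for each of $f_a, g_a, h_a$, then the main argument above for odd $p$ with $N = \max(o_f+1, N_2)$. The PCF hypothesis enters precisely through the finiteness of $O_f$, which is what makes $i(p)$ uniformly bounded.
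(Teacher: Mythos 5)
Your proposal is correct and follows essentially the same route as the paper: one direction is immediate from Remark~2.5 (a prime of type $nn\dots n$ keeps every iterate irreducible), and the other uses the uniform bound $i(p)\le o_f$ coming from PCF-ness to take $N=o_f+1$, with $p=2$ handled separately via the explicit forms $f_a,g_a,h_a$. (Your labels ``forward''/``reverse'' are swapped relative to the statement, but the content of both directions matches the paper's proof.)
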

	
	\begin{proof}
    Recall that for some $a\in \mathbb{Z}$, $f$ is equal to $f_a$, $g_a$ or $h_a$, where $f_a, g_a, h_a$ are as in Theorem $1.2$. It is clear that for any $a\in \mathbb{Z}$, all three of these polynomials are already reducible modulo $2$. Hence, it suffices to consider $f$ modulo odd primes. Note that ($\implies$) part is clear by Remark $2.5$. We now prove the other direction. Using Remark $2.5$, for any odd prime $p$, if $\bar{f}(x)\in \mathbb{F}_p[x]$ does not have type $nn\dots n$, then there exists $N_p\in \mathbb{Z}$ such that $\bar{f^i}(x)\in \mathbb{F}_p[x]$ is reducible for all $i\geq N_p$. Since $f$ is PCF with orbit size $|O_f|$, by the second part of Remark $2.5$, we have $N_p\leq |O_f|+1$ for each odd prime $p$. Now, taking $N = |O_f| + 1$, the result directly follows.
	\end{proof}

	To be able to use Lemma 3.1, we will first determine all $f(x)\in \mathbb{Z}[x]$ such that $f\in X_a$ for some $a$ and there does not exist any odd prime $p$ such that $\bar{f}(x)\in \mathbb{F}_p[x]$ has type of the form $n\dots n$. For simplicity, we give the following definition:
	
	\begin{definition}
		Let $f(x)\in X_a$ for some $a\in \mathbb{Z}$. Suppose $f(x)\in \mathbb{Z}[x]$ is irreducible. We say $f$ is a \textbf{special type polynomial} if there does not exist any odd prime $p$, for which $\bar{f}(x)\in \mathbb{F}_p[x]$ is irreducible and has type $n\dots n$.
	\end{definition}
	
	The following lemma gives all special type polynomials:
	
	\begin{lemma}
		Let $a\in \mathbb{Z}$, and $f(x)\in X_a$. Then $f$ is a special type polynomial if and only if it  has one of the forms $(x-b^2)^2+b^2$, $(x+b^2)^2-b^2-1$,  $(x-b^2-1)^2+b^2$, $(x-b^2)^2+b^2-1$ or $(x+2-b^2)^2+b^2-4$, where $b\in \mathbb{Z}$.
		
	\end{lemma}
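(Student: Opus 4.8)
The plan is to go through the three families $f_a(x)=(x+a)^2-a$, $g_a(x)=(x+a)^2-a-1$ and $h_a(x)=(x+a)^2-a-2$ one at a time and, for each, to reduce the condition that $f$ be a special type polynomial to an arithmetic statement about which of a short list of integers attached to $f$ is a perfect square. Throughout we may assume $f$ is irreducible over $\mathbb{Z}$, since Definition 3.3 only speaks of special type polynomials under that hypothesis.

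For each family I would first record two invariants. A one-line computation gives the discriminants $d_{f_a}=4a$, $d_{g_a}=4(a+1)$ and $d_{h_a}=4(a+2)$, so that for an odd prime $p$ the reduction $\bar f$ is irreducible over $\mathbb{F}_p$ exactly when $a$, resp.\ $a+1$, resp.\ $a+2$, is a nonzero non-square modulo $p$; in particular irreducibility of $f$ over $\mathbb{Z}$ forces that integer to be a nonzero non-perfect-square. Next, the orbit of the critical point $c=-a$ is $O_{f_a}=\{-a\}$, $O_{g_a}=\{-a-1,-a\}$ (a $2$-cycle), and $O_{h_a}=\{-a-2,\,2-a\}$ (a tail of length one onto the fixed point $2-a$); evaluating $f$ along it gives $f_a(-a)=-a$, $g_a(-a-1)=-a$, $g_a(-a)=-a-1$ and $h_a(-a-2)=h_a(2-a)=2-a$. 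Unwinding the definition of the type of $\bar f$, we see that $\bar f$ is irreducible of type $n\dots n$ over $\mathbb{F}_p$ exactly when the irreducibility condition holds together with: $-a$ a non-square mod $p$ (for $f_a$); both $-a$ and $-a-1$ non-squares mod $p$ (for $g_a$); $2-a$ a non-square mod $p$ (for $h_a$). Hence $f$ is not a special type polynomial precisely when some odd prime $p$ makes the relevant Legendre symbols all equal to $-1$.

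The core of the proof is to decide, family by family, whether such a prime exists. The tool I would use is the standard consequence of the Chebotarev density theorem (equivalently of Dirichlet's theorem together with quadratic reciprocity): if nonzero integers $m_1,\dots,m_k$ are multiplicatively independent in $\mathbb{Q}^{\times}/(\mathbb{Q}^{\times})^2$ (i.e.\ no nonempty subproduct is a square in $\mathbb{Q}$), then for any prescribed choice of signs there are infinitely many, in particular unramified, primes $p$ with $\left(\tfrac{m_i}{p}\right)$ equal to the prescribed sign for each $i$. For $f_a$: the two conditions $\left(\tfrac{a}{p}\right)=\left(\tfrac{-a}{p}\right)=-1$ force $\left(\tfrac{-1}{p}\right)=1$, hence reduce to $p\equiv1\pmod{4}$ and $\left(\tfrac{|a|}{p}\right)=-1$; when $f_a$ is irreducible and $-a$ is not a perfect square, $|a|$ is not a perfect square, so $\{-1,|a|\}$ is independent in $\mathbb{Q}^{\times}/(\mathbb{Q}^{\times})^2$ and such a $p$ exists, whereas if $-a=b^2$ the condition $\left(\tfrac{-a}{p}\right)=-1$ is impossible and $f_a$ is a special type polynomial: this is the form $(x-b^2)^2+b^2$. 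The family $g_a$ is parallel: the three conditions on $a+1,\,-a,\,-a-1$ force $\left(\tfrac{-1}{p}\right)=1$ and reduce to $p\equiv1\pmod{4}$, $\left(\tfrac{|a|}{p}\right)=\left(\tfrac{|a+1|}{p}\right)=-1$; since $a$ and $a+1$ are coprime, as long as none of $a,\,-a,\,-(a+1)$ is a perfect square the set $\{-1,|a|,|a+1|\}$ is independent in $\mathbb{Q}^{\times}/(\mathbb{Q}^{\times})^2$ and the prime exists, while if $-a$ or $-(a+1)$ is a perfect square the corresponding condition is impossible, and if $a=b^2$ then multiplying the three conditions would force $\left(\tfrac{a}{p}\right)=-1$, also impossible, so $g_a$ is a special type polynomial; this gives the forms $(x+b^2)^2-b^2-1$, $(x-b^2)^2+b^2-1$ and $(x-b^2-1)^2+b^2$. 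Finally, for $h_a$ the relevant integers are $a+2$ and $2-a$ with product $4-a^2$: if $2-a$ is not a perfect square and $4-a^2$ is also not a perfect square, then $\{a+2,\,2-a\}$ is independent in $\mathbb{Q}^{\times}/(\mathbb{Q}^{\times})^2$ and the prime exists; the only irreducible $h_a$ with $2-a$ not a perfect square but $4-a^2$ a perfect square is $h_0=x^2-2$, for which any prime $p\equiv5\pmod{8}$ works (there $2$ is a non-square, so $\bar h_0$ is irreducible of type $nn$); and if $2-a=b^2$ the condition $\left(\tfrac{2-a}{p}\right)=-1$ is impossible, so $h_a$ is a special type polynomial: this is the form $(x+2-b^2)^2+b^2-4$.

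What remains is bookkeeping: verify that each of the five displayed forms is indeed irreducible over $\mathbb{Z}$ for the stated range of $b$ --- immediate from the discriminant formulas, the finitely many values of $b$ yielding a reducible polynomial simply falling outside the scope of Definition 3.3 --- and confirm that the \enquote{perfect-square} alternatives isolated above correspond exactly to these five families. The main difficulty I anticipate is organizational rather than conceptual: keeping the signs straight when passing among $a,\,-a,\,-a-1,\,2-a$ and their absolute values, checking multiplicative independence in $\mathbb{Q}^{\times}/(\mathbb{Q}^{\times})^2$ in each subcase, and not overlooking the genuinely degenerate instances --- such as $h_0=x^2-2$ --- where the independence argument breaks down and a prime must be exhibited by hand.
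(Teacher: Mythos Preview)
Your proposal is correct and follows the same case-by-case route as the paper: compute the critical orbit for each of $f_a$, $g_a$, $h_a$, translate the type-$n\dots n$ condition into Legendre-symbol constraints, and determine exactly which $a$ admit no prime satisfying them. The paper's own proof is much terser --- it simply asserts that the relevant arithmetic conclusions ``easily follow'' without invoking Chebotarev, multiplicative independence in $\mathbb{Q}^{\times}/(\mathbb{Q}^{\times})^2$, or the edge case $h_0=x^2-2$ --- so your write-up in fact supplies justifications the paper leaves implicit.
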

	
	\begin{proof}
		We will look at each form of polynomials in $X_a$ separately:
		
		\item[i)] $f(x)=(x+a)^2-a$ : The critical point is $-a$, and it follows that the post-critical orbit becomes $\{-a\}$. Hence, type is determined by the set $\{f(-a)\}=\{-a\}$. For f to be special type, $-a$ must be square in $\mathbb{F}_p$ for every odd prime $p$ such that $\bar{f}(x)\in \mathbb{F}_p[x]$ is irreducible, which holds if and only if $a=-b^2$ for some $b\in \mathbb{Z}$.\\
		\item[ii)] $f(x)=(x+a)^2-a-1$ : The critical point is $-a$, and the post-critical orbit becomes $\{-a-1, -a\}$. So, type is determined by the set $\{f(-a-1),f(-a)\}=\{-a,-a-1\}$. For $f$ to be special type, for any given odd prime $p$ such that $\bar{f}(x)\in \mathbb{F}_p[x]$ is irreducible (i.e., $(\frac{a+1}{p}) = -1$), either $-a$ or $-a-1$ need to be square in $\mathbb{F}_p$. It easily follows that all such values of $a$ are $a=b^2$, $a=-b^2$ or $a=-b^2-1$ for some $b\in \mathbb{Z}$.\\
		\item[iii)] $f(x)=(x+a)^2-a-2$: The post-critical orbit is $\{-a-2,-a+2\}$. Similarly, type is determined by the set $\{f(-a-2),f(-a+2)\}=\{2-a\}$. For $f$ to be special type, $2-a$ must be a square in $\mathbb{F}_p$ for any odd prime $p$ such that $\bar{f}(x)\in \mathbb{F}_p[x]$ is irreducible. It easily follows that all such $a$ values are $a=2-b^2$ for some $b\in \mathbb{Z}$, which completes the proof.
	\end{proof}

   Next, we recall some terminology from the theory of polynomial iteration:
   
   \begin{definition}
   	Let $K$ be a field, and $f(x)\in K[x]$ a polynomial. We say that $f(x)$ is \textbf{stable} if $f^n(x)$ is irreducible over $K$ for all $n\geq 1$.
   \end{definition} 
	
	We now determine, up to three possible exceptions, all monic, stable PCF quadratic polynomials $f(x)\in \mathbb{Z}[x]$. We will then combine this with Lemma 3.1 and Lemma 3.3 to prove Theorem $1.1$.\\
	
	To do this, we will give three separate results for the three forms $f_a,g_a,h_a$.
	
	\begin{proposition}
		Let $f(x)=f_a(x)\in \mathbb{Z}[x]$ be irreducible for some $a\in \mathbb{Z}$. Then $f$ is stable if and only if $a\neq -4u^4$ for any $u\in \mathbb{Z}$.
	\end{proposition}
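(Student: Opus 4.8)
The plan is to apply Theorem 2.7 (the Ayad–McQuillan criterion) to $f = f_a(x) = (x+a)^2 - a$. First I would compute the relevant invariants for this family: since $f(x) = x^2 + 2ax + (a^2-a)$, we have $a$-coefficient $2a$ and constant term $a^2-a$, so $d_f = (2a)^2 - 4(a^2-a) = 4a$ and $\delta_f = -d_f + 2(2a) = -4a + 4a = 0$. Hence $\delta_0 = \delta_f/4 = 0$ and $d_0 = d_f/4 = a$. This is the crucial simplification: because $\delta_0 = 0$, the recursion $d_i = -\delta_0 + \sqrt{d_{i-1}} = \sqrt{d_{i-1}}$ becomes just iterated square roots of $a$, and the auxiliary polynomials degenerate to $g_0(x) = -x$, $g_1(x) = x^2$, $g_r(x) = x^{2^r}$, so $g_r = g_r(\delta_0) = g_r(0) = 0$ for all $r \geq 1$, while $g_0 = -\delta_0 = 0$ as well. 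So every $g_r = 0$.

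**Translating the criterion.** With $g_r = 0$ for all $r$, the condition in Theorem 2.7 that there exist $a_r, b_r \in K_{n-r-1}$ with $g_r^2 - b_r^2 = d_{n-r-1}$ and $a_r^2 = (g_r + b_r)/2$ becomes: $-b_r^2 = d_{n-r-1}$ and $a_r^2 = b_r/2$. So I need $-d_{n-r-1}$ to be a square in $K_{n-r-1}$ (giving $b_r = \pm\sqrt{-d_{n-r-1}}$) and then $b_r/2$ to be a square there too, plus the compatibility relation (2.1). I would next unwind what $K_i$ is: $K = \mathbb{Q}$, $K_1 = \mathbb{Q}(\sqrt a)$, $K_2 = \mathbb{Q}(\sqrt{\sqrt a})$, etc. The instability of $f$ (i.e. $f^{n+1}$ reducible while $f^n$ irreducible, for the smallest such $n$) should, after chasing these square-root conditions through the tower, force $a$ to be of a very specific shape. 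I expect the analysis to collapse to the case $n=1$ or $n=2$ essentially because the nesting of radicals over $\mathbb{Q}$ is so restrictive — a nested radical $\sqrt{\sqrt a}$ lies in a field where "being a square" is extremely rigid — and I anticipate that working out exactly when $-a$ and then $\sqrt{-a}/2$-type quantities are squares in $\mathbb{Q}(\sqrt a)$ leads precisely to $a = -4u^4$.

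**The forward and backward directions.** For the "only if" direction: assuming $f$ is unstable, take the least $n$ with $f^{n+1}$ reducible; apply Theorem 2.7; show (using that $f^n$ irreducible forces the $d_i$ for small $i$ to be non-squares in the appropriate $K_i$, hence those fields are genuine quadratic extensions) that the square conditions can only be met when $a = -4u^4$, essentially by reducing to the first instability occurring at $f^2$. For the "if" direction: when $a = -4u^4$, I would directly exhibit the factorization of $f^2(x)$ — with $\delta_0 = 0$ the second iterate is $f^2(x) = ((x+a)^2-a+a)^2 - a = ((x+a)^2)^2 - a = (x+a)^4 - a = (x+a)^4 + 4u^4$, which factors by the Sophie Germain identity $Y^4 + 4u^4 = (Y^2 + 2u^2 + 2uY)(Y^2+2u^2-2uY)$ with $Y = x+a$ — so $f^2$ is reducible and hence $f$ is not stable. (One also notes $a = -4u^4 \le 0$, and for $f_a$ itself to be irreducible over $\mathbb{Q}$ one needs $d_f = 4a$ a non-square, which holds when $a < 0$, $a \neq 0$; the case $u = 0$, $a=0$ gives $f(x) = x^2$, reducible, so is excluded by the irreducibility hypothesis.)

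**Main obstacle.** The main difficulty I foresee is the bookkeeping in the "only if" direction: carefully arguing that no instability can first appear at $f^{n+1}$ for $n \geq 2$. This requires understanding squares in the radical tower $K_i = \mathbb{Q}(a^{1/2^i})$ and showing the compatibility relations (2.1) together with the square conditions are simultaneously unsatisfiable there unless $n = 1$. I would handle this by a descent/degree argument on the tower — if $-d_{n-1} = -\sqrt[2^{n-1}]{a}$ (up to the right normalization) were a square in $K_{n-1}$, track what this forces about $a$ back in $\mathbb{Q}$ — together with the observation from Remark 2.5-type reasoning that the relevant $d_i$ must remain non-squares as long as the iterates stay irreducible. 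The rest (computing invariants, the Sophie Germain factorization, checking irreducibility of $f_a$) is routine.
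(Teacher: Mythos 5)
Your reduction of the criterion (computing $d_f=4a$, $\delta_f=0$, hence $\delta_0=0$ and $g_r=0$ for all $r$) and your treatment of the easy direction are fine: when $a=-4u^4$ the Sophie Germain factorization of $f^2(x)=(x+a)^4+4u^4$ indeed shows $f$ is not stable. But the substantial half of the proposition --- that $a\neq -4u^4$ (together with irreducibility of $f$, i.e.\ $a$ not a square) forces \emph{every} iterate to be irreducible --- is not proved in your proposal; it is only announced. The phrases ``I expect the analysis to collapse to the case $n=1$ or $n=2$'' and ``I anticipate that working out \dots\ leads precisely to $a=-4u^4$'' are exactly the content that needs an argument. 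Ruling out a first reducibility at $f^{n+1}$ for $n\geq 2$ via Theorem 2.7 requires controlling squares, the sign ambiguities in $d_i=\sqrt{d_{i-1}}$, and the compatibility relation (2.1) inside the tower $K_i=\mathbb{Q}(a^{1/2^i})$, and nothing in the proposal actually carries this out; as written, the hard direction is a plan, not a proof.

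The gap is avoidable, and you were one line away from the paper's route: you already observed $f^2(x)=(x+a)^4-a$, and the same computation gives $f^n(x)=(x+a)^{2^n}-a$ for all $n$, so after the substitution $y=x+a$ every iterate is the pure binomial $y^{2^n}-a$. The Vahlen--Capelli theorem (quoted in the paper as Thm 8.1.6 of \cite{Karp}) then says $y^{2^n}-a$ is irreducible over $\mathbb{Q}$ if and only if $a\notin\mathbb{Q}^2$ and $a\notin -4\mathbb{Q}^4$; the first condition is the hypothesis that $f$ is irreducible, and for integral $a$ the second is exactly $a\neq -4u^4$ with $u\in\mathbb{Z}$ (a short gcd argument shows $-4(p/q)^4\in\mathbb{Z}$ forces $q=\pm1$). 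This disposes of all $n$ simultaneously and makes the entire radical-tower analysis in your ``only if'' direction unnecessary. If you insist on the Ayad--McQuillan framework, you must actually supply the descent argument you gesture at; in its current form the proposal establishes only that $a=-4u^4$ implies instability, not the converse.
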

	
	\begin{proof}
		Recall that $f_a(x) = (x+a)^2-a$. Note that the $n$th iterate of $f_a$ for any $n$ is $(x+a)^{2^n}-a$. It is a well-known fact in field theory that for a field $F$, for any $c\in F$, and $k\geq 1$ an integer, $x^k-c\in F[x]$ is irreducible if and only if $c\notin F^p$ for all primes $p|k$ and $c\notin -4F^4$ when $4|k$ (see Thm 8.1.6 in \cite{Karp}). Taking $F=\mathbb{Q}$, making the change of variable $y=x+a$, and noting that $4|2^n$ in our case, the result directly follows.
	\end{proof}
	
	\begin{remark}
		It is clear from the proof that $f_a$ is stable if and only if $f_a^2$ is irreducible.
	\end{remark}
	\begin{proposition}
		Let $f(x)=h_a(x)\in \mathbb{Z}[x]$ be irreducible for some $a\in \mathbb{Z}$. Then $f$ is stable if and only if $a\neq 2-(2v^2-2)^2$ for any $v \in \mathbb{Z}$.
	\end{proposition}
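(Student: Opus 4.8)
The plan is to feed $h_a$ into the Ayad--McQuillan criterion (Theorem 2.7) after noting that essentially all of its auxiliary data degenerates. First, $h_a$ is $\mathbb{Q}$-conjugate to the single polynomial $\psi(x)=x^2-2$: with $\phi(x)=x+a$ one has $\phi\circ h_a=\psi\circ\phi$, so $h_a^n(x)=\psi^n(x+a)-a$ and hence $h_a^n$ is irreducible over $\mathbb{Q}$ if and only if $\psi^n(x)-a$ is. In particular $h_a$ is irreducible precisely when $a+2$ is not a perfect square, which is our standing hypothesis; and $\psi^2(x)-a=x^4-4x^2+(2-a)$, which already isolates the case $2-a=m^2$ as the one requiring work. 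Computing the invariants of Definition 2.6 for $h_a=x^2+2ax+(a^2-a-2)$ gives $d_f=4(a+2)$ and $\delta_f=-8$, so $d_0=a+2$ and $\delta_0=-2$; therefore $g_1(x)=x^2-2$ and, since $2$ is a fixed point of $x^2-2$, the sequence $(g_r)_{r\ge 0}$ is constant equal to $2$. This is the decisive simplification.

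Now suppose $h_a^n$ is irreducible and $h_a^{n+1}$ is reducible for some $n\ge 1$, and apply Theorem 2.7, using only the instance $r=n-1$: it produces $a_{n-1},b_{n-1}\in K_{n-r-1}=K_0=\mathbb{Q}$ with $g_{n-1}^2-b_{n-1}^2=d_0$ and $a_{n-1}^2=(g_{n-1}+b_{n-1})/2$. Since $g_{n-1}=2$ and $d_0=a+2$, this collapses to the single, $n$-independent system $b_{n-1}^2=2-a$ and $a_{n-1}^2=(2+b_{n-1})/2$ over $\mathbb{Q}$. Thus for every $n\ge 1$, reducibility of $h_a^{n+1}$ (given $h_a^n$ irreducible) forces this one system to be solvable over $\mathbb{Q}$.

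The core is then elementary: the system is solvable over $\mathbb{Q}$ iff $2-a=m^2$ for some $m\in\mathbb{Z}_{\ge 0}$, $b_{n-1}=\pm m$, and one of $(2+m)/2$, $(2-m)/2$ is a square in $\mathbb{Q}$. A short descent (if $(2+m)/2=(p/q)^2$ in lowest terms then $q^2\mid 2$, so $q=1$ and $2+m=2p^2$) shows $(2+m)/2$ (resp.\ $(2-m)/2$) is a rational square iff $m+2=2v^2$ (resp.\ $2-m=2v^2$) for some $v\in\mathbb{Z}$, i.e.\ iff $m=\pm(2v^2-2)$, i.e.\ iff $a=2-(2v^2-2)^2$. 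Consequently: if $a\neq 2-(2v^2-2)^2$ for all $v$, the $r=n-1$ system is unsolvable for every $n\ge 1$, so by the contrapositive of Theorem 2.7 and induction (base case: $h_a$ irreducible) all iterates $h_a^n$ are irreducible, i.e.\ $h_a$ is stable. Conversely, if $a=2-(2v^2-2)^2$, apply the converse half of Theorem 2.7 with $n=1$: its only requirement is the $r=0$ system, satisfied by $b_0=2v^2-2$ and $a_0=v$ (there is no auxiliary relation to verify, since no $r$ satisfies $1\le r\le 0$), so $h_a^2$ is reducible; equivalently and explicitly, $\psi^2(x)-a=(x^2+2vx+m)(x^2-2vx+m)$ with $m=2v^2-2$. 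Hence $h_a$ is not stable.

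The only care needed is bookkeeping: getting $d_0,\delta_0$ and especially the constancy of $(g_r)$ right, and matching the index $K_{n-r-1}$ so that the $r=n-1$ slice lands in $\mathbb{Q}$; at each inductive step the hypothesis "$h_a^n$ irreducible" of Theorem 2.7 is exactly the inductive hypothesis. The main conceptual point -- and what makes the argument short -- is that $(g_r)$ is constant and that the single $r=n-1$ slice already encodes the obstruction uniformly in $n$; as a byproduct one also sees that $h_a$ is stable if and only if $h_a^2$ is irreducible, matching the analogue (Remark 3.6) for $f_a$.
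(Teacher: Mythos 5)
Your proof is correct, but it takes a different route from the paper's. The paper reduces stability to irreducibility of $h_a^2$ by citing two further results of Ayad--McQuillan (their Theorem 3: $f$ is stable when $16 \nmid d_f$; and their Remark 4: when $16 \mid d_f$ and $\delta_f=-8$, stability is equivalent to irreducibility of $f^2$), and then finds the exceptional $a$ by writing $f^2(x)=h(x+a)h(-(x+a))$ via Jones--Boston Proposition 2.6 and equating coefficients, which yields $a=2-(2v^2-2)^2$. You instead extract the reduction directly from the quoted Theorem 2.7: since $\delta_0=-2$ and $2$ is a fixed point of $x^2-2$, the sequence $(g_r)$ is constant equal to $2$, so the $r=n-1$ slice of the criterion is an $n$-independent system over $\mathbb{Q}$, namely $b^2=2-a$, $a_{n-1}^2=(2\pm m)/2$, whose unsolvability (for $a$ not of the exceptional form) blocks reducibility at every level by induction; the exceptional values are disposed of by the explicit factorization $x^4-4x^2+(2-a)=(x^2+2vx+m)(x^2-2vx+m)$ with $m=2v^2-2$, which does not even need the converse half of Theorem 2.7. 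Your bookkeeping is right ($d_0=a+2$, $\delta_0=-2$, $K_{n-r-1}=\mathbb{Q}$ for $r=n-1$, the relation $(2.1)$ vacuous when $n=1$), and the descent showing $(2\pm m)/2$ is a rational square only when $2\pm m=2v^2$ is sound. What each approach buys: yours is self-contained modulo Theorem 2.7 alone (no appeal to the uncited Theorem 3/Remark 4 of Ayad--McQuillan or to the Jones--Boston factorization lemma), and it is in effect a special case of the paper's later Theorem 4.1, whose proof uses exactly this periodicity of $(g_r)$ when $t_f=1$; the paper's route is shorter given the cited machinery and produces the shape of the factorization of $f^2$ directly.
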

	
	\begin{proof}
		Recall that $h_a(x) = (x+a)^2-a-2$. Using the notation in Definition $2.6$, we have $d_f=4a+8$, $\delta_f=-8$. By (\cite{AyadMcdonald}, Theorem $3$), $f$ is stable if $16 \not|\text{ } d_f$. So, assume $16| d_f$. Then by (\cite{AyadMcdonald}, Remark $4$), since $\delta_f = -8$, we have $f$ is stable if and only if $f^2$ is irreducible. So, it is enough to understand the values of $a$ that make $f^2$ reducible. Noting that $f$ is irreducible, if $f^2$ is reducible, using (\cite{JonesBoston}, Proposition $2.6$), 
		$$f^2(x)=h(x+a)h(-(x+a))$$
		 for some monic quadratic polynomial $h(x)\in \mathbb{Z}[x]$. Putting $h(x)=x^2+vx+w$, and equating the coefficients on both sides, we get \begin{equation}
		2w-v^2=-4,
		\end{equation}and
		 \begin{equation}
		 -a+2=w^2.
		 \end{equation} Note that $v$ must be even. If we replace $v$ by $2v$, $(3.1)$ becomes \begin{equation}
		 w-2v^2=-2.
		 \end{equation} Combining $(3.2)$ with $(3.3)$, we get $a= 2 - (2v^2-2)^2$. It is clear that $f^2$ will be reducible if $a$ is of this form. Thus, $f^2(x)$ is irreducible if and only if $a\neq 2-(2v^2-2)^2$ for any $v \in \mathbb{Z}$, which completes the proof.
	\end{proof}
	
	\begin{theorem}\label{thm:neat2}
		Let $f(x)=g_a(x)\in \mathbb{Z}[x]$ be irreducible for some $a\in \mathbb{Z}$. Let $S = \{9, 9801, 332929\}$, and suppose that $f(x) \neq (x-m^2)^2 + m^2-1$ for any $m\in S$. Then $f$ is stable if and only if $a\neq -4(r^2\pm r\sqrt{2(r^2-1)})^4-1$ for any integer solution $r$ of the Pell equation $2r^2-t^2=2$ and $a\neq -(2k^2-1)^2$ for any $k\in \mathbb{Z}$.
	\end{theorem}  
	To prove this theorem, we will first prove two different lemmas, which together will directly imply Theorem 3.8.\\
	\begin{lemma}
		Let $f(x)=g_a(x)\in \mathbb{Z}[x]$ be irreducible for some $a\in \mathbb{Z}$. Then $f^3$ is irreducible if and only if $a\neq -4(r^2\pm r\sqrt{2(r^2-1)})^4-1$ for any integer solution $r$ of the Pell equation $2r^2-t^2=2$ and $a\neq -(2k^2-1)^2$ for any $k\in \mathbb{Z}$.
	\end{lemma}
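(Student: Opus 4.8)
The plan is to break the irreducibility of $g_a^3$ into two successive one‑step problems, each of which can be settled either with the Ayad--McQuillan criterion (Theorem 2.7) or, equivalently, by an explicit computation in a tower of quadratic fields. Since $g_a$ is irreducible and $g_a^3=g_a^2\circ g_a$, the polynomial $g_a^3$ is reducible precisely when \emph{either} $g_a^2$ is already reducible, \emph{or} $g_a^2$ is irreducible but the step to $g_a^3$ introduces a factorization; so it is enough to settle these two cases. Throughout one uses, for $g_a(x)=(x+a)^2-(a+1)$, the invariants of Definition 2.6: $d_f=4(a+1)$, $\delta_f=-4$, hence $\delta_0=-1$, $d_0=a+1$, $d_i=1+\sqrt{d_{i-1}}$, and $g_0=1,\ g_1=0,\ g_2=-1$.

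First I would determine when $g_a^2$ is reducible. By Theorem 2.7 with $n=1$ (only $r=0$), this happens iff there are $a_0,b_0\in\mathbb{Q}$ with $1-b_0^2=a+1$ and $a_0^2=\tfrac{1+b_0}{2}$; eliminating $b_0$ gives $a=-(2a_0^2-1)^2$, integrality forces $a_0\in\mathbb{Z}$, and the converse clause of Theorem 2.7 shows every such $a$ really works. Hence $g_a^2$ is reducible $\iff a=-(2p^2-1)^2$ for some $p\in\mathbb{Z}$ — the second excluded family. (One can also see this directly: with $w=x+a$, $g_a^2$ becomes $w^4-2w^2-a$, and since $a+1\notin\mathbb{Q}^2$ its only possible rational factorization is the symmetric one $(w^2+bw+c)(w^2-bw+c)$, forcing $c^2=-a$ and $b^2=2(c+1)$, i.e.\ $c=2p^2-1$.)

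Next, assuming $g_a^2$ irreducible of degree $4$, I would analyze $g_a^3$. If $\theta$ is a root of $g_a^2$, then $g_a^3$ is reducible iff $g_a(x)-\theta=(x+a)^2-(\theta+a+1)$ splits over $\mathbb{Q}(\theta)$, i.e.\ iff $\theta+a+1\in\mathbb{Q}(\theta)^2$. From $g_a^2(\theta)=0$ one gets $g_a(\theta)=-a+\mu$ with $\mu^2=a+1$, so $(\theta+a)^2=1+\mu$ and $\theta=-a+\nu$ with $\nu^2=1+\mu$; thus $\mathbb{Q}(\theta)=\mathbb{Q}(\nu)\supseteq\mathbb{Q}(\mu)$ is a tower of quadratics and the condition is $1+\nu\in\mathbb{Q}(\nu)^2$. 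Writing $1+\nu=(p+q\nu)^2$ with $p,q\in\mathbb{Q}(\mu)$ gives $2pq=1$ and $4p^4-4p^2+(1+\mu)=0$, so $p^2=\tfrac{1\pm\sqrt{-\mu}}{2}$: one needs (i) $-\mu\in\mathbb{Q}(\mu)^2$ and (ii) $\tfrac{1\pm\sqrt{-\mu}}{2}\in\mathbb{Q}(\mu)^2$ for a suitable sign. By the irreducibility criterion for $x^4-(a+1)$ used in Proposition 3.5 (with $a+1\notin\mathbb{Q}^2$), (i) holds iff $a+1\in-4\mathbb{Q}^4$, i.e.\ $a=-4e^4-1$ for some $e\in\mathbb{Z}$ (with $e=0$ giving the excluded reducible $g_a$); then $\mu=2e^2 i$, $\mathbb{Q}(\mu)=\mathbb{Q}(i)$, $\sqrt{-\mu}=\pm e(1-i)$. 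Using that $\alpha+\beta i\in\mathbb{Q}(i)^2$ iff $\alpha^2+\beta^2\in\mathbb{Q}^2$ and $\tfrac{\alpha+\sqrt{\alpha^2+\beta^2}}{2}\in\mathbb{Q}^2$, condition (ii) becomes $2e^2\pm2e+1=k^2$ together with $1\pm e+k$ a perfect square; the identity $(1+e+k)(1+e-k)=-e^2$ fixes the sign and lets one eliminate $k$, yielding $e=-m^2\pm m\sqrt{2(m^2-1)}$, which is an integer exactly when the Pell equation $2m^2-t^2=2$ is solvable. Thus $e$ runs over the values $\pm\bigl(r\sqrt{2(r^2-1)}\pm r^2\bigr)$ and $a$ over the first excluded family. (Running Theorem 2.7 with $n=2$, where both $r=0,1$ occur and the relation $b_0=\pm\bigl(a_1-\tfrac{\sqrt{d_0}}{2a_1}\bigr)$ reproduces $b_0=\pm a_1(1-i)$, gives the same list and supplies the converse direction.)

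The hard part is the last step, condition (ii): it is really a small system of Diophantine equations carrying several sign ambiguities, and the work is to show it is solvable precisely along the forward orbit of the single Pell equation $2r^2-t^2=2$, to verify that for each such $r$ all the nested square conditions can be met simultaneously (so that the converse clause of Theorem 2.7 genuinely applies and the ``iff'' holds in both directions), and to confirm that the two excluded families are disjoint (they meet only at the already-excluded $a=-1$). Everything else is a routine, if somewhat lengthy, unwinding of Definition 2.6 and of the quadratic formula in the tower $\mathbb{Q}(\mu)\subseteq\mathbb{Q}(\nu)$.
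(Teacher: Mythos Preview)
Your argument is correct but follows a genuinely different route from the paper's. For the first step (reducibility of $g_a^2$), the paper uses the symmetric factorization of \cite{JonesBoston}, Proposition~2.6, writing $f^2(x)=h(x+a)h(-(x+a))$ with $h$ quadratic and matching coefficients; you instead invoke Theorem~2.7 directly (and note the equivalent elementary factorization), arriving at the same family $a=-(2p^2-1)^2$.

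The real divergence is in the second step. Assuming $g_a^2$ irreducible, the paper again applies \cite{JonesBoston}, Proposition~2.6, now with a monic quartic $h(x)=x^4+kx^3+lx^2+mx+n$, and equates coefficients in $f^3(x)=h(x+a)h(-(x+a))$ to obtain the four Diophantine relations (3.11)--(3.14); eliminating $k,l,m,n$ by brute force eventually forces $2(r^2-1)$ to be a perfect square and pins down $a$. Your method is instead field--theoretic: via Capelli you reduce reducibility of $g_a^3$ to the square condition $1+\nu\in\mathbb{Q}(\nu)^2$ in the explicit tower $\mathbb{Q}\subset\mathbb{Q}(\mu)\subset\mathbb{Q}(\nu)$ with $\mu^2=a+1$, $\nu^2=1+\mu$, and then descend this condition first to $\mathbb{Q}(\mu)$ (forcing $a+1\in-4\mathbb{Z}^4$, which already explains the shape $a=-4e^4-1$) and then, using the arithmetic of $\mathbb{Q}(i)$, to a small integral system that collapses to the same Pell equation. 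The paper's approach is entirely mechanical and needs no Galois input; yours is conceptually cleaner, isolates the intermediate condition $a=-4e^4-1$ for free, and would adapt more naturally to deeper iterates, at the cost of having to manage several interacting sign choices in the descent. Incidentally, your final parametrization $e=\pm\bigl(r\sqrt{2(r^2-1)}\pm r^2\bigr)$ matches the form in Theorem~1.1(4) rather than the displayed $r^2\pm\sqrt{2(r^2-1)}$ in the lemma statement and in (3.19); carrying the paper's own elimination through to the end also yields your version, so this is a typographical slip in the paper, not a defect in your argument.
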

	\begin{proof}
		Recall that $g_a(x) = (x+a)^2-a-1$. We will first determine the values $a$ that make $f^2$ irreducible. For this, suppose that $f^2$ is reducible. Noting that $f$ is irreducible and using (\cite{JonesBoston}, Proposition $2.6$), we have
	     $$f^2(x)=h(x+a)h(-(x-a))$$for some monic quadratic polynomial $h(x)\in \mathbb{Z}[x]$. Setting $h(x)=x^2+kx+l$, and equating the coefficients in both sides, get 
		$$a=-l^2$$
		and 
		$$2l-k^2=-2.$$
		Note that $k$ is even, so replace $k$ by $2k$, and obtain
		
		$$l-2k^2=-1,$$
		which gives
		$$a = -(2k^2-1)^2.$$
		So, if $a\neq -(2k^2-1)^2$ for any $k\in \mathbb{Z}$, $f^2$ is irreducible. It is also clear that if $a$ is of this form, then $f^2$ is reducible. Hence, we get that $f^2$ is irreducible if and only if $a\neq -(2k^2-1)^2$ for any $k\in \mathbb{Z}$.\\
		
		We now assume that $f^2$ is irreducible and $f^3$ is reducible. Again using (\cite{JonesBoston}, Proposition $2.6$), we have 
		$$f^3(x)=h(x+a)h(-(x+a))$$
		 for some monic quartic $h(x)\in \mathbb{Z}[x]$. Setting $h(x)=x^4+kx^3+lx^2+mx+n$, and equating the coefficients in both sides, get \begin{equation}
		2l-k^2=-4,
		\end{equation} 
		\begin{equation}
		l^2+2n-2km=4,
		\end{equation}
		\begin{equation}
	    2ln-m^2=0,
		\end{equation} and
		\begin{equation}
		-(n^2+1)=a.
		\end{equation} If we consider $(3.4)$, for $l$ to be an integer, $k$ must be even, so put $k=2r$. Using this in $(3.4)$ and $(3.6)$, get \begin{equation}
		l=2r^2-2
		\end{equation} and 
		\begin{equation}
		n=\frac{m^2}{4r^2-4}.
		\end{equation} Using $(3.8)$ and $(3.9)$ in $(3.5)$, and simplifying, get 
		\begin{equation}
		8m^2-[16r(4r^2-4)]m+(4r^2-4)^3-16(4r^2-4)=0.
		\end{equation} Considering this as a quadratic equation in $m$, to have an integer solution, the discriminant needs to be a square, i.e. (after simplification) $(4r^2-4)(512r^4)$ needs to be square, i.e. $2(r^2-1)$ needs to be a square. Therefore, there exists an integer $t$ such that 
		$$2r^2-t^2=2,$$
		which is a Pell equation, solutions of which could easily be given in terms of a fundamental unit in $\mathbb{Z}[\sqrt{2}]$. Writing $m$ in terms of $r$ using $(3.10)$, and using $(3.7)$ and $(3.9)$, after simplifying, get \begin{equation}
		a = -4(r^2\pm r\sqrt{2(r^2-1)})^4-1.
		\end{equation} Hence, if $f^2$ is irreducible and $a \neq -4(r^2\pm r\sqrt{2(r^2-1)})^4-1$ for any $r\in \mathbb{Z}$, then $f^3$ irreducible. It is also clear that $f^3$ is reducible if $a$ is of this form. Hence, assuming that $f^2$ is irreducible, we get that $f^3$ is irreducible if and only if $a \neq -4(r^2\pm r\sqrt{2(r^2-1)})^4-1$ for any $r\in \mathbb{Z}$. Combining this with the first part, the proof of the lemma is complete.
	\end{proof}
	\begin{lemma}
		Let $f(x)=g_a(x)\in \mathbb{Z}[x]$ be irreducible for some $a\in \mathbb{Z}$. Let $S = \{9, 9801, 332929\}$, and suppose that $f(x) \neq (x-m^2)^2 + m^2-1$ for any $m\in S$. Then $f$ is stable if and only if $f^3$ is irreducible.
	\end{lemma}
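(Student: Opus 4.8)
The ``only if'' direction is immediate, since a stable polynomial has all of its iterates irreducible. For the converse I would use the Ayad--McQuillan criterion (Theorem 2.7), in the same spirit as the proofs of Propositions 3.7 and 3.8. Writing $g_a(x)=x^2+2ax+(a^2-a-1)$, Definition 2.6 gives $d_f=4a+4$ and $\delta_f=-4$, hence $\delta_0=-1$, $d_0=a+1$, the recursion reads $d_i=1+\sqrt{d_{i-1}}$, and the associated sequence is $g_0=1$ with $g_r=0$ for $r$ odd and $g_r=-1$ for even $r\ge 2$.

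The first move is a reduction extracted from the \emph{bottom} level of Theorem 2.7. Assume $g_a$ is irreducible and some iterate is reducible, and let $n\ge 1$ be least with $f^n$ irreducible and $f^{n+1}$ reducible. Theorem 2.7 then produces $b_{n-1}\in K_0=\mathbb Q$ with $b_{n-1}^2=g_{n-1}^2-d_0$, which equals $-a$ when $n$ is odd and $-(a+1)$ when $n$ is even; since $a\in\mathbb Z$ this forces $a=-b^2$ (for $n$ odd) or $a=-b^2-1$ (for $n$ even) for some $b\in\mathbb Z$. Consequently every $g_a$ with $a$ of neither form --- in particular every $a=b^2$ --- is automatically stable, and we are reduced to the two families $a=-b^2$ and $a=-b^2-1$. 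Note also the parity constraint this produces: for $a=-b^2$ the first reducibility can occur only at odd $n$, and for $a=-b^2-1$ only at even $n$.

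For $a=-b^2-1$, the case $n=2$ (after substituting $a=-4a_1^4-1$ coming from level $r=1$ and feeding level $r=0$ together with the chaining identity into the field $K_1=\mathbb Q(\sqrt{d_0})$) reduces to requiring $2a_1^2\pm 2a_1+1$ and a further auxiliary quantity to be squares; unwinding this yields exactly the family of Lemma 3.10 governed by the Pell equation $2r^2-t^2=2$, which is excluded once $f^3$ is irreducible. For $a=-b^2$, the case $n=1$ similarly gives $b\in\{2p^2-1\}$, again excluded once $f^3$ is irreducible; the case $n=3$ is where the genuine work lies: eliminating $a_0,a_1,a_2,b_0,b_1,b_2$ from the relations $b_r^2=g_r^2-d_{2-r}$, $2a_r^2=g_r+b_r$ and the chaining identity linking levels $1$ and $2$ leads to a quadratic Diophantine system whose solvability is again controlled by $2r^2-t^2=2$, and whose only solutions for which $f^3$ remains irreducible are $b\in S=\{9,9801,332929\}$, excluded by hypothesis. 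Finally, for $n\ge 4$ I would show that no such $n$ can occur at all, by descending the tower $\mathbb Q=K_0\subset K_1\subset\cdots\subset K_{n-1}$ and showing that the accumulated square conditions near its foot cannot hold simultaneously once the tower has height $\ge 4$. Combining these cases: if $f^3$ is irreducible and $a\neq -m^2$ for $m\in S$, then no iterate of $f$ is reducible, i.e. $f$ is stable.

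I expect the main obstacle to be the $n=3$ analysis for $a=-b^2$: translating the Ayad--McQuillan data for the pair $(3,4)$ into a self-contained Diophantine system, recognizing the Pell equation inside it, and then carrying out the finite check that isolates precisely $\{9,9801,332929\}$. The second delicate point is the uniform ``no reducibility for $n\ge 4$'' statement; here I expect the argument to rely on the eventual $2$-periodicity of the $g_r$ forced by $\delta_0=-1$ --- concretely, the two levels with $g_r=0$ must carry compatible data, and this fails once the radical tower is deep enough.
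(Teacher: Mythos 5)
Your overall framework is the paper's: the Ayad--McQuillan tower (Theorem 2.7) with $\delta_0=-1$, the periodic sequence $g_0=1$, $g_{\mathrm{odd}}=0$, $g_{\mathrm{even}}=-1$, the bottom-level deduction that $a=-b^2$ (odd $n$) or $a=-b^2-1$ (even $n$), and the appearance of the Pell family for the even case, which Lemma 3.9 converts into reducibility of $f^3$ and hence a contradiction. But there are two genuine gaps. First, in the odd case (your ``$n=3$'' analysis, which is where $S$ comes from), you assert that the resulting Diophantine system is ``controlled by $2r^2-t^2=2$'' and that ``a finite check'' isolates $\{9,9801,332929\}$. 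That cannot work as stated: a Pell equation has infinitely many integer solutions, so no finite check is available. What actually comes out of eliminating the level-$(N-1)$ and level-$(N-2)$ data in the odd case is the quartic equation $2p^4-1=t^2$ (equivalently $X^2+1=2Y^4$), and the finiteness of its solutions ($p=1,13$) is Ljunggren's theorem --- this deep input is precisely what produces the exceptional values $a=-1,-9^2,-9801^2,-332929^2$ and hence the excluded set $S$. Your sketch omits this ingredient entirely, and without it the key finiteness claim is unsupported.

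Second, your plan to treat $n\ge 4$ separately, by arguing that the ``accumulated square conditions near the foot of the tower cannot hold simultaneously once the tower has height $\ge 4$,'' is both unnecessary and not correct in the unconditional form you state it. Because the sequence $(g_r)$ is $2$-periodic, the paper's computation at levels $r=N-1$ and $r=N-2$ is literally the same for every $N\ge 3$ of a given parity, so the $n=3$ analysis already covers all odd $n\ge 5$, and the even analysis covers all even $n\ge 4$; no tower-depth argument is needed. Moreover, such an absolute impossibility statement cannot be true: for even $n$ the necessary conditions \emph{are} satisfiable (by the Pell-family values of $a$, ruled out here only because $f^3$ would then be reducible), and for odd $n\ge 5$ they are satisfiable at least potentially by $a=-m^2$ with $m\in S$ --- indeed the paper cannot exclude reducibility of higher iterates for those three values, which is exactly why they are hypothesized away and why their stability is left as Conjecture 3.12. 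So the conclusion you want for $n\ge 4$ is available only conditionally, via the same parity computation plus the hypotheses, not via the structural obstruction you propose. (Minor points: under the hypothesis that $f^3$ is irreducible, your cases $n=1,2$ are vacuous since reducibility of $f$ or $f^2$ forces reducibility of $f^3$; and eliminating all six quantities $a_0,\dots,b_2$ at $n=3$ is more than needed --- the two bottom levels suffice.)
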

	\begin{proof}
		One direction is obvious, so assume $f^3$ is irreducible, and we will show that $f$ is stable. Assume for the sake of contradiction that there exists $N\geq 3$ such that $f^N$ is irreducible, but $f^{N+1}$ is reducible. We will use the notation in Definition $2.6$ throughout the proof. First note that by direct calculation, we have $d_f=4a+4$, $\delta_f = -4$. By (\cite{AyadMcdonald}, Theorem $3$), $f$ is stable if $d$ is not divisible by $16$. So, assume $16|d$. With the notation in Definition $2.6$, we have $\delta_0 = \frac{\delta}{4}=-1$, hence $g_0=1, g_1=0, g_2=-1, g_3=0, \dots$ In other words, we have \begin{equation}
		g_0=1, \text{ }g_{2k-1}=0,\text{ and } g_{2k}=-1 \text{ for all } k\geq 1.
		\end{equation} 
		
		First assume that $N\geq 3$ above is odd. So, $g_{N-1}=-1$. Since $f^N$ is irreducible and $f^{N+1}$ is reducible, taking $n=N$ and $r=N-1$ in Theorem $2.8$, there exist integers $a_{N-1}, b_{N-1}$ such that 
		$$g_{N-1}^2-b_{N-1}^2 =1-b_{N-1}^2=\frac{d_f}{4}=a+1$$
		 and $$a_{N-1}^2 = (g_{N-1}+b_{N-1})/2 = (-1+b_{N-1})/2,$$
		 which give
		 \begin{equation}
		 a=-b_{N-1}^2,
		 \end{equation} and
		 \begin{equation}
		 b_{N-1}=2a_{N-1}^2+1,
		 \end{equation}
		 respectively.
		 Taking $n=N$ and $r=N-2$ in Theorem $2.8$ and using $(3.13)$, there exist $a_{N-2},b_{N-2}\in \mathbb{Q}(\sqrt{-b_{N-1}^2+1})$ such that \begin{equation}
		 g_{N-2}^2-b_{N-2}^2=-b_{N-2}^2=1+\sqrt{-b_{N-1}^2+1}
		 \end{equation} and 
		 \begin{equation}
		 2a_{N-2}^2 = b_{N-2}.
		 \end{equation} Let
		 \begin{equation}
		 b_{N-2}=x+y\sqrt{-b_{N-1}^2+1}
		 \end{equation}
		 for some $x,y\in \mathbb{Q}$. Using this in $(3.15)$, get  
		 \begin{equation}
		 x^2+y^2(-b_{N-1}^2+1)=-1
		 \end{equation} and 
		 \begin{equation}
		 -2xy=1.
		 \end{equation} Using $(3.19)$ in $(3.18)$ and simplifying, get
		 $$4x^4+4x^2-b_{N-1}^2+1=0,$$
		  which gives us 
		 $$b_{N-1}=\pm (2x^2+1).$$
		 Using $(3.14)$, $b_{N-1}$ is a positive integer, so \begin{equation}
		 b_{N-1}=2x^2+1
		 \end{equation} for some integer $x$. Again using $(3.14)$, get 
		 $$x = \pm a_{N-1}.$$
		  Since both cases give the same $a$ value (by $(3.13)$ and $(3.14)$), we can assume without loss of generality that 
		 \begin{equation}
		 x = a_{N-1}.
		 \end{equation}
		 Using $(3.19)$, this gives 
		 \begin{equation}
		 y = -\frac{1}{2a_{N-1}}.
		 \end{equation} Set 
		 $$a_{N-2} = \alpha + \beta \sqrt{-b_{N-1}^2+1}$$
		  for some $\alpha, \beta \in \mathbb{Q}$. Using this expression together with $(3.17)$, $(3.20)$ and $(3.21)$ in $(3.16)$, after simplifying, get 
		 \begin{equation}
		 2 (\alpha^2 + \beta^2 (-b_{N-1}^2 + 1)) = a_{N-1}
		 \end{equation} and 
		 \begin{equation}
		 4\alpha\beta = -\frac{1}{2a_{N-1}}.
		 \end{equation} Using $(3.14)$ and $(3.24)$ in $(3.23)$, and simplifying, get 
		 $$a_{N-1}^2 + 8\alpha^2 a_{N-1} -16\alpha^4 + 1 = 0.$$
		  Hence, 
		 $$a_{N-1} = -4\alpha^2 \pm \sqrt{32\alpha^4 - 1}.$$ 
		 Set $\alpha = \frac{p}{q}$ for $p,q \in \mathbb{Z}$ with $\gcd(p,q) = 1$. Using this, we get 
		 $$a_{N-1} = \frac{-4p^2 \pm \sqrt{32p^4 - q^4}}{q^2}.$$
		 $q$ must be even, as otherwise inside of the radical would be $-1$ (mod $4$), so set $q = 2r$. Simplifying, get 
		 $$a_{N-1} = \frac{-p^2 \pm \sqrt{2p^4 - r^4}}{r^2}.$$
		  Noting that $\gcd(p,q) = \gcd(p,r) =1$, a direct divisibility calculation implies that $r = \pm 1$. So, 
		 $$a_{N-1} = -p^2 \pm \sqrt{2p^4-1}.$$
		 Recall that $a_{N-1}$ is an integer, so
		 $$2p^4-1 = t^2$$
		 for some $t\in \mathbb{Z}$. It is known that this Diophantine equation has only $2$ positive integer solutions, namely $p = 1$ and $p = 13$ (see \cite{Ljunggren}). Using these solutions together with $(3.13)$ and $(3.14)$, we only get the solutions $a = -1^2, -9^2, -9801^2, -332929^2$. Note that $a = -1$ gives $f(x) = (x-1)^2$, which contradicts $f$ being irreducible. The remaining values were also forbidden at the beginning. Hence, we are done with this case.\\ 
		
		Next, we look at the case that $N$ is even. Note that $g_{N-1} = 0$. Since $f^N$ is irreducible and $f^{N+1}$ is reducible, taking $n=N$ and $r=N-1$ in Theorem $2.8$, there exists an integer $b_{N-1}$ such that
		$$g_{N-1}^2-b_{N-1}^2=-b_{N-1}^2=a+1,$$
		 hence 
		\begin{equation}
		a=-(b_{N-1}^2+1).
		\end{equation} Secondly, taking $n=N$ and $r=N-2$ in Theorem $2.8$ and using $(3.25)$, there exist $a_{N-2},b_{N-2}\in \mathbb{Q}(\sqrt{-b_{N-1}^2})=\mathbb{Q}(i)$ such that 
		$$g_{N-2}^2-b_{N-2}^2=1-b_{N-2}^2=1+\sqrt{-b_{N-1}^2}$$
		 and
		$$a_{N-2}^2=(g_{N-2}+b_{N-2})/2=(-1+b_{N-2})/2,$$
		which give 
		\begin{equation}
		-b_{N-2}^2=ib_{N-1}
		\end{equation} and 
		\begin{equation}
		b_{N-2}=2a_{N-2}^2+1,
		\end{equation}
		respectively. We can assume without loss of generality that $b_{N-1}$ is positive. Now, setting $b_{N-2}=z+ti$ for some $z,t\in \mathbb{Q}$, and using this in $(3.26)$, we get 
		\begin{equation}
		z=-t
		\end{equation}
		\begin{equation}
		b_{N-1}=2z^2.
		\end{equation} (Note that $(3.28)$ and $(3.29)$ also show that $z,t$ are integers, since $b_{N-1}$ is an integer.) Setting $a_{N-2}=u+iv$ for some $u,v\in \mathbb{Q}$, using this together with $(3.26)$ and $(3.29)$ in $(3.27)$, it follows that \begin{equation}
		2u^2-2v^2+1=-4uv=\pm z,
		\end{equation} which gives
		\begin{equation}
		2u^2+4vu-2v^2+1=0.
		\end{equation} Considering $(3.31)$ as a quadratic equation in $u$, get
		$$u=-v\pm\frac{\sqrt{8v^2-2}}{2}.$$
		 Using this together with $(3.30)$, and simplifying, get 
		\begin{equation}
		\pm z=4v^2\pm 2v\sqrt{8v^2-2}.
		\end{equation} Recall that $z$ is an integer. Set $v=\frac{p}{q}$ for some $p,q\in \mathbb{Z}$ such that $\gcd(p,q)=1$. Using this in $(3.32)$, and simplifying, get 
		\begin{equation}
		\pm z=\frac{p}{q^2}(4p\pm 2\sqrt{8p^2-2q^2}),
		\end{equation} hence 
		$$8p^2-2q^2=w^2$$
		 for some $w\in \mathbb{Z}$. Since $w$ must be even, put $w=2w_1$, and get 
		$$4p^2-q^2=2w_1^2.$$Since $q$ must be even, put $q=2q_1$ and get
		$$2(p^2-q_1^2)=w_1^2,$$
		 which gives
		$$2(p-q_1)(p+q_1)=w_1^2.$$
		Since $\gcd(p,q_1)=1$, we can assume without loss of generality that 
		\begin{equation}
		2(p-q_1)=\theta^2,
		\end{equation} and 
		\begin{equation}
		p+q_1=\gamma^2.
		\end{equation}Isolating $p$ and $q_1$ using $(3.34)$ and $(3.35)$, get
		$$p=\frac{\theta^2+2\gamma^2}{4},$$
		and
		$$q=2q_1=\frac{2\gamma^2-\theta^2}{2}.$$
		Note that $p$ and $q$ are both integers, so it follows that $\theta$ and $\gamma$ are both even. Put $\theta=2\theta_1$ and $\gamma=2\gamma_1$, hence get 
		$$p=\theta_1^2+2\gamma_1^2,$$
		 and 
		$$q=4\gamma_1^2-2\theta_1^2.$$
		Using these in $(3.33)$, get \begin{equation}
		\pm z = \frac{(\theta_1^2+2\gamma_1^2)(\theta_1^2+2\gamma_1^2\pm 4\theta_1\gamma_1)}{(2\gamma_1^2-\theta_1^2)^2}.
		\end{equation} Recall again that $z$ is an integer. Note that $\gcd(p,q)=1$, which implies that $\gcd(\theta_1,\gamma_1)=1$ as well. Since $z$ is an integer, it follows from an elementary divisibility calculation that 
		$$2\gamma_1^2-\theta_1^2 = \pm 2^b$$ for some non-negative integer $b$. However, if $b$ is positive, then $\theta_1$ will have to be even, which will make $p$ even, which will contradict with the fact that $p$ and q are relatively prime, since $q$ is also even. Hence, $b$ has to be $0$, and we get
		$$2\gamma_1^2-\theta_1^2=\pm 1.$$
		 Since two cases follow similarly, assume without loss of generality that 
		$$2\gamma_1^2-\theta_1^2=1.$$ Using this in $(3.36)$, get 
		$$\pm z=(4\gamma_1^2-1)(4\gamma_1^2-1\pm 4\theta_1\gamma_1)$$
		 for $\theta_1,\gamma_1\in \mathbb{Z}$ such that $2\gamma_1^2-\theta_1^2=1$. Setting $r=4\gamma_1^2-1$, and simplifying, this last equation gives
		$$\pm z=r(r\pm \sqrt{2(r^2-1)}).$$
        Combining this with $(3.25)$ and $(3.29)$, get
		$$a=-4(r^2\pm r\sqrt{2(r^2-1)})^4-1,$$
		 which, by Lemma $3.9$, implies that $f^3$ is reducible, contradicting our assumption. So, $f$ is stable if and only if $f^3$ is irreducible, as desired.
	\end{proof}
	\begin{proof}[Proof of Proposition \ref{thm:neat2}]
		It directly follows from Lemma $3.9$ and Lemma $3.10$.	
	\end{proof}
	\begin{remark}
		Except for the excluded three integer values of $a$, Theorem $3.8$ also answers the question posed at the end of (\cite{AyadMcdonald}, Remark $4$), namely the question of stability of an irreducible quadratic polynomial $f(x)=x^2+ax+b$ in the case $16|d_f$ and $\delta_f=-4$, as follows: It is clear that any polynomial of the form $(x+a)^2-a-1$ such that $a\equiv 3$ (mod $4$) gives $16|d_f$ and $\delta_f=-4$. To see the converse, suppose $f(x)=x^2+ax+b$ is such that $16|d_f$ and $\delta_f=-4$. We have $d_f = a^2-4b$ is divisible by $16$, set $a=2a_1$ for some $a_1\in \mathbb{Z}$. Using this and Definition $2.6$, we have
	    $$\delta_f=-d_f+4a_1=-4a_1^2+4b+4a_1=-4.$$
		 Simplifying, we get
		$$b=a_1^2-a_1-1,$$
		 hence $f(x)=x^2+2a_1x+a_1^2-a_1-1=(x+a_1)^2-a_1-1$, which finishes the proof.
	\end{remark}
    We found using MAGMA that the excluded three values in Theorem $3.8$ still make the first $10$ iterates of $f(x)$ irreducible, so based on this we make the following conjecture:
	\begin{conjecture}
		Let $S$ be the set given in Theorem $3.8$. Then for any $m\in S$, $f(x) = (x-m^2)^2+m^2-1 \in \mathbb{Z}[x]$ is stable.
	\end{conjecture}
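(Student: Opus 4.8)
Here is the approach I would take toward Conjecture 3.13. Fix $m \in S$ and put $a = -m^2$, so that $f = g_a$ with $f(x) = (x-m^2)^2 + m^2 - 1$. From Lemma 3.9 one reads off that $f$, $f^2$ and $f^3$ are irreducible, since for $m \in S$ neither $-m^2 = -(2p^2-1)^2$ nor $-m^2 = -4(r^2 \pm \sqrt{2(r^2-1)})^4 - 1$ has a solution — this is precisely the computation that produced the set $S$. Suppose for contradiction that $f$ is not stable, and let $N \geq 3$ be minimal with $f^N$ irreducible and $f^{N+1}$ reducible. Since each $m \in S$ is odd, $16 \mid d_f = 4(1-m^2)$, so (\cite{AyadMcdonald}, Theorem $3$) gives no information; one has $\delta_f = -4$, $\delta_0 = -1$, and hence, as in the proof of Lemma 3.10, $g_0 = 1$, $g_{2k-1} = 0$ and $g_{2k} = -1$ for $k \geq 1$, $d_0 = 1 - m^2$, and $d_i = 1 + \sqrt{d_{i-1}}$ for $i \geq 1$.

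The plan is to push the system of conditions from Theorem 2.7 (with $n = N$, and $r$ running over $N-1, N-2, N-3,\dots$) one level further down the tower $K_0 \subseteq K_1 \subseteq K_2 \subseteq \cdots$ than was needed in Lemma 3.10. If $N$ is even, this is instant: here $g_{N-1} = 0$, so the $r = N-1$ relation $g_{N-1}^2 - b_{N-1}^2 = d_0$ becomes $-b_{N-1}^2 = 1 - m^2$ with $b_{N-1} \in \mathbb{Z}$, forcing $m^2 - 1$ to be a perfect square, which is impossible for $m \geq 2$. So $N$ is odd, $g_{N-1} = -1$, and carrying out the $r = N-1$ and $r = N-2$ relations exactly as in the proof of Lemma 3.10 forces $b_{N-1} = m$ and $a_{N-1}^2 = (m-1)/2$ (a perfect square for each $m \in S$, so $a_{N-1} \in \mathbb{Z}$), and then pins down, up to finitely many sign choices, the element $a_{N-2} = \alpha + \beta\sqrt{d_0} \in K_1$ via $a_{N-2}^2 = \tfrac12\bigl(a_{N-1} - \sqrt{d_0}/(2a_{N-1})\bigr)$; the solvability of this in the imaginary quadratic field $K_1 = \mathbb{Q}(\sqrt{1-m^2})$ is what the Ljunggren equation $2p^4 - 1 = t^2$ governs, and what placed $m$ in $S$. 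Everything so far is consistent, as it must be. The new input is the level $r = N-3$: the coupling relation $(2.1)$ at $r = N-2$ gives $b_{N-3} = \pm\bigl(a_{N-2} - \sqrt{d_1}/(2a_{N-2})\bigr) \in K_2$; a short computation using the identity $a_{N-2}^4 = -d_1/4$ (which does hold) shows the relation $g_{N-3}^2 - b_{N-3}^2 = d_2$ is then automatically satisfied; but Theorem 2.7 also requires $a_{N-3}^2 = (g_{N-3} + b_{N-3})/2$ for some $a_{N-3} \in K_2$, and this is the condition I expect to fail. As $g_{N-3} = 1$ when $N = 3$ and $g_{N-3} = -1$ when $N \geq 5$, and as $b_{N-3} \in K_2$ depends only on $m$ and not on $N$, there are essentially only two cases per $m$.

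Thus the conjecture reduces to checking, for each $m \in S$, that neither $(1 + b_{N-3})/2$ nor $(-1 + b_{N-3})/2$ is a square in $K_2 = \mathbb{Q}\bigl(\sqrt{1-m^2},\, \sqrt{1 + \sqrt{1-m^2}}\bigr)$. Since $[K_2 : K_1] = 2$ (one checks $d_1$ is not a square in $K_1$), a candidate square root is of the form $P + Q\sqrt{d_1}$ with $P, Q \in K_1$, and solvability is equivalent to the nonsquareness in $K_1$ of the two ``discriminants'' $X^2 - d_1 Y^2$, where $(\pm 1 + b_{N-3})/2 = X + Y\sqrt{d_1}$ with $X, Y \in K_1$. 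For $m = 9$, these discriminants come out to $\sqrt{-5}/2$ and $-1/2$, neither a square in $\mathbb{Q}(\sqrt{-5})$, so the conjecture holds for $m = 9$; one then repeats the computation for $m = 9801$, where $K_1 = \mathbb{Q}(\sqrt{-29})$, and for $m = 332929$.

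The step I expect to be the main obstacle is precisely this last verification for $m = 9801$ and $m = 332929$: there the nested radicals of $1-m^2$ defining $K_2$ have substantial height, $a_{N-2}$ already lives in a quadratic field of large discriminant, and one must confirm the discriminants $X^2 - d_1 Y^2$ are non-squares while keeping track of all the $\pm$ ambiguities in Theorem 2.7 — there is no a priori reason a clever combination of signs cannot rescue the configuration, so the check must be exhaustive rather than formal. The problem is genuinely delicate for a structural reason: $a = -m^2$ was built to meet the obstruction conditions at the first two levels, and it turns out to meet the relation at the third level as well, so that the whole content of the conjecture lies in the single remaining third-level requirement $a_{N-3}^2 = (g_{N-3}+b_{N-3})/2$ having no solution — a real, if finite, Diophantine computation. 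Should a uniform treatment of the three values prove awkward, an equivalent goal is to establish the analogue of Lemma 3.10 for these $a$, namely that $g_{-m^2}$ is stable if and only if $f^3$ is irreducible (one expects the same bound $3$ to work), after which the already-established irreducibility of $f^3$ completes the argument.
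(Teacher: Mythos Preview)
The paper offers no proof of this statement: it is stated as a conjecture, supported only by MAGMA computations of many iterates. So there is nothing in the paper to compare your proposal against; you are attempting to settle an open problem.

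Your strategy---pushing the Ayad--McQuillan descent of Theorem~2.7 one level deeper than Lemma~3.10, to $r=N-3$, and using the periodicity of $(g_i)$ to collapse all odd $N$ to two cases---is the natural next move, and several of the supporting claims are correct: the even-$N$ case dies instantly because $b_{N-1}^2=m^2-1$ has no rational solution for $m\geq 2$; the identity $a_{N-2}^4=-d_1/4$ does follow from the level-$(N-2)$ relations when $g_{N-2}=0$; and the norm test (if $X^2-d_1Y^2$ is a nonsquare in $K_1$, then $X+Y\sqrt{d_1}$ is a nonsquare in $K_2$) is a valid sufficient criterion. In particular, your proposal does reduce the conjecture to a finite list of explicit square-tests in number fields of degree~$4$.

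That said, what you have is a plan rather than a proof, and the gap is exactly where you locate it. First, the verifications for $m=9801$ and $m=332929$ are not carried out at all, and even the $m=9$ case is asserted rather than exhibited; these checks \emph{are} the conjecture, so omitting them omits everything. Second, and more structurally, there is no argument that descending to level $r=N-3$ is enough. The values in $S$ are singled out precisely because the obstructions at levels $N-1$ and $N-2$ are met; nothing prevents the level-$(N-3)$ conditions from being met too, in which case your method yields no contradiction and one must descend further, with no termination mechanism in sight. This open-endedness is presumably why the paper records the statement as a conjecture rather than attempting a proof. Finally, a minor point: your claim that solvability of $a_{N-3}^2=(g_{N-3}+b_{N-3})/2$ in $K_2$ is \emph{equivalent} to the norm being a square in $K_1$ is too strong---the norm condition is necessary but not sufficient---though the direction you actually need (nonsquare norm $\Rightarrow$ nonsquare element) is fine.
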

We now restate Theorem $1.2$ and give its proof:
\begin{customthm}{1.2}
	Let $S = \{9,9801,332929\}$, and suppose that $g_a(x) \neq (x-m^2)^2 + m^2-1$ for any $m\in S$. Then we have the following:\\
	\item[(i)] All the iterates of $f_a$ are irreducible $\iff$ $f_a^2$ is irreducible.\\
	
	\item[(ii)] All the iterates of $g_a$ are irreducible $\iff$ $g_a^3$ is irreducible.\\
	
	\item[(iii)] All the iterates of $h_a$ are irreducible $\iff$ $h_a^2$ is irreducible.
\end{customthm}
\begin{proof}[Proof of Theorem \ref{thm:neat3}]
\item[(i)] 	This follows from Remark $3.6$.
\item[(ii)] This follows from Lemma $3.10$.
\item[(iii)] Recall from the proof of Proposition $3.7$ that since $\delta_f=-8$ in this case, we have $f$ is stable if and only if $f^2$ is irreducible (by (\cite{AyadMcdonald}, Remark $4$)), as desired.
\end{proof}
	We can finally prove Theorem $1.1$. We first restate the theorem using Notation $2.9$:
	\begin{customthm}{1.1}
    
	Let $f(x)\in X_a$ for some $a\in \mathbb{Z}$. Let $S = \{9, 9801, 332929\}$, and suppose that $f(x) \neq (x-m^2)^2 + m^2-1$ for any $m\in S$. Then $f^n(x)\in \mathbb{Z}[x]$ is irreducible for all $n$, and there exists $N\in \mathbb{N}$ such that $f^n(x)$ is reducible modulo every prime for all $n\geq N$ if and only if $f(x)$ has one of the following forms:\\
	
	\item[1)] $f(x)=(x-b^2)^2+b^2$ for some $b\in \mathbb{Z}$ such that $b\neq 2k^2$ for any $k\in \mathbb{Z}$.\\
	\item[2)] $f(x)=(x+b^2)^2-b^2-1$ for some $b\in \mathbb{Z}$.\\
	\item[3)] $f(x)=(x-b^2)^2+b^2-1$ for some $b\in \mathbb{Z}$ such that $b\neq 2k^2-1$ for any $k\in \mathbb{Z}$.\\
	\item[4)] $f(x)=(x-b^2-1)^2+b^2$ for some $b\in \mathbb{Z}$ such that $b\neq 2(r^2\pm r\sqrt{2(r^2-1)})^2$ for any integer solution $r$ of the Pell equation $2r^2-t^2=2$.\\
	\item[5)] $f(x)=(x+2-b^2)^2+b^2-4$ for some $b\in \mathbb{Z}$ such that $b\neq 2v^2-2$ for any $v\in \mathbb{Z}$.
	\end{customthm}
	\begin{proof}[Proof of Theorem \ref{thm:neat}]
		Recall that the union $\cup_{a\in \mathbb{Z}} X_a$ is equal to the set of all monic, PCF quadratic polynomials with integer coefficients. Using Lemma $3.1$ and Definition $3.2$, and also again noting that for any $a\in \mathbb{Z}$, all three polynomials in $X_a$ are reducible (mod $2$), it suffices to classify all $f(x)$ which are stable and special type. We will look at the cases $f_a, g_a$ and $h_a$ separately:\\
		
		\item[(i)] $f(x)=f_a(x)$. Recall that $f_a(x) = (x+a)^2-a$. For $f$ to be a special type polynomial, by Lemma $3.3$, it must be of the form $f(x)=(x-b^2)^2+b^2$ for some $b\in \mathbb{Z}$. Using this and Proposition $3.5$, $f$ is stable if and only if $b\neq 2k^2$ for any $k\in \mathbb{Z}$, which gives that $f$ is a stable and special type polynomial if and only if $f(x)=(x-b^2)^2+b^2$ for some $b\in \mathbb{Z}$ such that $b\neq 2k^2$ for any $k\in \mathbb{Z}$.\\
		\item[(ii)] $f(x) = g_a(x)$. Recall that $g_a(x)=(x+a)^2-a-1$. For $f$ to be a special type polynomial, by Lemma $3.3$, it must be of the form $f(x)=(x+b^2)^2-b^2-1$, $(x-b^2)^2+b^2-1$ or $(x-b^2-1)^2+b^2$ for some $b\in \mathbb{Z}$. Using this and Theorem $3.8$, $f$ is a stable and special type polynomial if and only if $f(x)=(x+b^2)^2-b^2-1$ for some $b\in \mathbb{Z}$ or $f(x)=(x-b^2)+b^2-1$ such that $b\neq 2k^2-1$ for any $k\in \mathbb{Z}$ or $f(x)=(x-b^2-1)^2+b^2$ such that $b\neq 2(r^2\pm r\sqrt{2(r^2-1)})^2$ for any integer solution $r$ of the Pell equation $2r^2-t^2=2$.\\
		\item[(iii)] $f(x)=(x+a)^2-a-2$. For $f$ to be a special type polynomial, by Lemma $3.3$, it must be of the form $f(x)=(x+2-b^2)^2+b^2-4$ for some $b\in \mathbb{Z}$. Using this and Proposition $3.7$, $f$ is a stable and special type polynomial if and only if $f(x)=(x+2-b^2)^2+b^2-4$ such that $b\neq 2v^2-2$ for any $v\in \mathbb{Z}$, which completes the proof.
	\end{proof}
	\section{A rigidity conjecture for stability}
	In this last section, we first give a generalization of Proposition $3.7$, and then make a conjecture based on the results we have found so far.\\
	
	Let $K$ be any field of characteristic $\neq 2$, and $f(x)\in K[x]$ a quadratic polynomial. Recall that we denote by $o_f$ and $t_f$ the critical orbit size and tail size of $f$, respectively.
	
	\begin{theorem}
		Let $K$ be any field of characteristic $\neq 2$, and $f(x)\in K[x]$ be a monic, PCF quadratic polynomial. Suppose that $t_f=1$. Then $f$ is stable if and only if $f^{o_f}$ is irreducible. 
	\end{theorem}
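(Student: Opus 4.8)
The plan is to prove the substantive implication, that $f^{o_f}$ irreducible forces $f$ stable; the reverse is trivial because $f^{o_f}$ is itself an iterate of $f$. The idea is a ``descent by one period'' based on Theorem $2.7$, with the hypothesis $t_f=1$ feeding in through a periodicity property of the sequence $\{g_r\}$ of Definition $2.6$.

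First I would isolate the following dynamical fact. After conjugating $f$ so that its critical point is $0$, it becomes the map $y\mapsto y^2+\delta_0$, whose critical orbit is $0\to\delta_0\to g_1\to g_2\to\cdots$; the hypothesis $t_f=1$ means exactly that $\delta_0$ is strictly preperiodic while $g_1$ already lies on the cycle, which is therefore $\{g_1,\dots,g_\ell\}$ with $\ell=o_f-1\ge 1$. Now $\delta_0$ and $g_\ell$ are both preimages of $g_1$ under $y\mapsto y^2+\delta_0$; they are distinct (since $g_\ell$ is periodic and $\delta_0$ is not), so they are the \emph{two} preimages, which, being the roots of $y^2-(g_1-\delta_0)$, sum to $0$; hence $g_\ell=-\delta_0=g_0$. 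Consequently the whole sequence $(g_r)_{r\ge 0}$ is periodic with period $\ell$. I expect this to be the crux of the argument: it is the only place $t_f=1$ is used, and the identity $g_0=g_\ell$ (rather than mere eventual periodicity of the $g_r$) is precisely what makes the descent below close up.

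Then I would argue by contradiction. Since reducibility of $f^j$ propagates to $f^{j+1}$ via $f^{j+1}(x)=f^j(f(x))$, and $f^{o_f}$ irreducible forces $f,\dots,f^{o_f}$ all irreducible, the failure of stability yields a minimal $M>o_f$ with $f^M$ reducible. Put $N=M-1\ (\ge o_f)$ and apply Theorem $2.7$ to the pair $(f^N\text{ irreducible},\,f^{N+1}\text{ reducible})$: we obtain, for $0\le r\le N-1$, elements $a_r,b_r\in K_{N-r-1}$ with $g_r^2-b_r^2=d_{N-r-1}$, $a_r^2=(g_r+b_r)/2$, and $b_{r-1}=\pm\bigl(a_r-\tfrac{\sqrt{d_{N-r-1}}}{2a_r}\bigr)$ for $1\le r\le N-1$. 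Since $\{d_i\}$ and $\{K_i\}$ depend only on $f$, for $0\le r\le N-\ell-1$ the data governing index $r$ of the pair $(f^{N-\ell}\text{ irreducible},\,f^{N+1-\ell})$ coincide with the data governing index $r+\ell$ of the first pair: the fields and $d$-elements match because $(N-\ell)-r-1=N-(r+\ell)-1$, and the $g$-scalars match, $g_r=g_{r+\ell}$, by the periodicity fact. Hence $a_r^{\ast}:=a_{r+\ell}$ and $b_r^{\ast}:=b_{r+\ell}$ satisfy every hypothesis of the converse part of Theorem $2.7$ for that pair — the recursion transfers verbatim under $r\mapsto r+\ell$, and its index-$0$ instance is exactly where $g_0=g_\ell$ is needed. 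Since $f^{N-\ell}=f^{M-o_f}$ is irreducible, Theorem $2.7$ then forces $f^{M-o_f+1}$ to be reducible; but $2\le M-o_f+1<M$, contradicting the minimality of $M$. Therefore $f$ is stable. (Specializing to $o_f=2$, i.e. $\delta_0=-2$, recovers Proposition $3.7$.)
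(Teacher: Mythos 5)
Your proposal is correct and follows essentially the same route as the paper: the key step in both is that $t_f=1$ forces $g_{o_f-1}=-\delta_0=g_0$, so the sequence $\{g_r\}$ of Definition $2.6$ is purely periodic with period $o_f-1$, after which the data furnished by Theorem $2.7$ is shifted down by that period and the converse part of Theorem $2.7$ yields a reducible low iterate, a contradiction. The only cosmetic differences are that you descend by a single period against a minimal reducible iterate while the paper reduces $N$ modulo $o_f-1$ to some $N'\leq o_f-1$ in one step, and that your ``two preimages of $g_1$ sum to zero'' argument replaces the paper's direct computation of the periodicity.
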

	
	\begin{proof}
		One direction is obvious, so we can assume that $f^{o_f}$ is irreducible. We will show that $f$ is stable.\\
		
		We can assume $o_f\geq 2$, as otherwise $t_f$ cannot be $1$. Let $f(x)=(x-c)^2+c+m$ for some $c,m\in K$, where $c$ is the critical point of $f$. So, the post-critical orbit of $f$ becomes 
		
		$$O_f = \{c+m, c+m^2+m, c+(m^2+m)^2+m,\dots\}.$$
		 Setting $f_m(x)=x^2+m$ gives 
		$$O_f = \{f_m(0)+c, f_m^2(0)+c, f_m^3(0)+c,\dots\}.$$
		 Since $t_f = 1$, we have 
		$$f_m^{o_f}(0) = f_m^2(0).$$
		$$\iff (f_m^{o_f-1}(0))^2+m = m^2+m.$$
		$$\iff f_m^{o_f-1}(0) = \pm m.$$
		 The fact that $t_f = 1$ directly implies $f_m^{o_f-1}(0)=-m$. Hence, it follows that the sequence $-m, m^2+m, (m^2+m)^2+m, \dots$ is periodic with period $o_f-1$. \\
		
		With the notation in Definition $2.6$, we have $\delta_f = 4m$, hence $\delta_0 = m$. So, again using the notation in Definition $2.6$, the corresponding sequence $\{g_i\}_{i\geq 0}$ is given by $g_0 = -m$, $g_i=(g_{i-1})^2+m$ for $i\geq 1$. Thus, the sequence $\{g_i\}_{i\geq 0}$ is periodic by the first part. Suppose for the sake of contradiction that there exists $N>o_f$ such that $f^N$ is irreducible, but  $f^{N+1}$ is reducible. By Theorem $2.8$, for every $r$, $0\leq r \leq N-1$, there exist elements $a_r, b_r \in K_{N-r-1}$ such that $g_r^2-b_r^2 = d_{n-r-1}$ and $a_r^2 = \frac{g_r + b_r}{2}$. We now let $N'$ be the integer such that $1\leq N'\leq o_f-1$ and $N\equiv N'$ (mod $o_f-1$). Since the sequence $\{g_i\}_{i\geq 0}$ is periodic, applying the converse part of Theorem $2.8$ to $f^{N'}$, we get that $f^{N'+1}$ is reducible, which contradicts the initial assumption, since $N'+1\leq o_f-1+1 = o_f$. Thus, $f$ is stable if and only if $f^{o_f}$ is irreducible, as desired. 
	\end{proof}
	
	\begin{remark}
		In Proposition $3.7$, it is proven that $f$ is stable if and only if $f^2$ is irreducible for the polynomials of the form $f(x)=(x+a)^2-a-2$. Note that $t_f=1$ and $o_f = 2$ in that particular case. Hence, it follows that Proposition $3.7$ is a special case of Theorem $4.1$.
	\end{remark}
	\begin{remark}
	It is natural to ask whether Theorem $1.2$ generalizes to post-critically finite polynomials with rational coefficients, since those are also of the form $(x+a)^2-a$, $(x+a)^2-a-1$ or $(x+a)^2-a-2$, $a\in \mathbb{Q}$. It follows from Theorem $4.1$ that this generalization is true for the family $(x+a)^2-a-2$. It is also clear from the proof of Proposition $3.5$ that same is true for the family $(x+a)^2-a$. However, our methods do not work for the polynomials of the form $(x+a)^2-a-1$, $a\in \mathbb{Q}$, because the proof of Lemma $3.10$ relies heavily on the fact that $a$ is an integer. In particular, if we do not assume that $a$ is integer, in the first part of the proof of Lemma $3.10$, we come across the elliptic curve $y^2=2x^4-1$, which has infinitely many rational points, none of which can be dealt with our methods. We get infinitely many such problematic values of $a$ from the second part of the proof of Lemma $3.10$ as well. So, this last case remains open. 
	\end{remark}
	Based on the results we have so far, we make the following conjecture:
	
	\begin{conjecture}
		Let $K$ be any field of characteristic $\neq 2$, and $f(x)\in K[x]$ be a monic, PCF quadratic polynomial. Then we have the following:\\
		
		\item[i)]  If $t_f = 0$, $f$ is stable if and only if $f^{o_f+1}$ is irreducible.
		\item[ii)] If $t_f > 0$, $f$ is stable if and only if $f^{o_f}$ is irreducible. 
		
	\end{conjecture}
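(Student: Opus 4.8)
The plan is to push the method of Theorem $4.1$ as far as it goes. In each part one implication is immediate: a stable $f$ has all its iterates irreducible, so $f^{o_f+1}$ (resp.\ $f^{o_f}$) is irreducible; it remains to prove the converse. Write $f(x)=(x-c)^2+c+m$ with $c$ the critical point. As in the proof of Theorem $4.1$, the critical orbit of $f$ is governed by $f_m(x)=x^2+m$ through $f^k(c)=f_m^{k}(0)+c$, so $o_f$ and $t_f$ depend only on $m$; and with the notation of Definition $2.6$ one has $\delta_0=m$, $g_0=-m$, and $g_i=f_m^{i}(m)=f^{i+1}(c)-c$ for $i\ge1$. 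Hence $\{g_i\}_{i\ge0}$ is eventually periodic, with eventual period $o_f-t_f$ if $t_f\ge1$ and $o_f$ if $t_f=0$; crucially, it is \emph{purely} periodic exactly when $t_f=1$, in which case $g_{o_f-1}=f^{o_f}(c)-c$ equals the second $f$-preimage $-m$ of $f^2(c)-c$, so $g_{o_f-1}=g_0$. When $t_f=0$ the seed $g_0=-m$ never meets the periodic tail, and when $t_f\ge2$ the block $g_0,\dots,g_{t_f-1}$ is a genuine transient.

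Next I would argue by contradiction as in Theorem $4.1$. Suppose $f$ is not stable and let $N$ be minimal with $f^N$ irreducible and $f^{N+1}$ reducible; since reducibility of one iterate of a quadratic propagates to all later ones, the hypothesis forces $N\ge o_f+1$ in part (i) and $N\ge o_f$ in part (ii). Apply Theorem $2.7$ with $n=N$ to get $a_r,b_r\in K_{N-r-1}$ for $0\le r\le N-1$, with $g_r^2-b_r^2=d_{N-r-1}$, $a_r^2=(g_r+b_r)/2$, and the recursion linking $b_{r-1}$ to $a_r$. The engine is that this system is \textbf{shift-covariant}: taking $\sigma$ a multiple of the eventual period and $M:=N-\sigma+1$, the pair $(a_{r+\sigma},b_{r+\sigma})$ satisfies the Theorem $2.7$ relations \emph{with parameter $n=M-1$} at every index $r$ for which $g_r=g_{r+\sigma}$ --- the recursion step transfers unconditionally, and the field index $N-(r+\sigma)-1=(M-1)-r-1$ together with the subscript of $d$ are preserved by the shift. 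Since the period divides $\sigma$, the equality $g_r=g_{r+\sigma}$ holds for every $r$ on the periodic tail.

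When $t_f=1$ every index is on the tail, so the shifted data satisfies all relations for a suitable $n=M-1$ with $M\le o_f$; here $f^{M-1}$ is still irreducible, so the converse half of Theorem $2.7$ makes $f^{M}$, hence $f^{o_f}$, reducible --- a contradiction. This recovers Theorem $4.1$ and settles both parts when $t_f=1$. \textbf{The obstruction, and the reason the statement is conjectural, is precisely the transient of $\{g_i\}$.} For $t_f\ne1$ the shifted data fails the relations at the indices $r<t_f$ (just $r=0$ when $t_f=0$; the whole block $r=0,\dots,t_f-1$ when $t_f\ge2$), where $g_r$ lies off the periodic tail; tracing the recursion shows the would-be repaired values cascade, so the data cannot simply be patched. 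What remains is to decide \emph{directly} whether the finitely many \textbf{boundary instances} of the Theorem $2.7$ relations at those indices are solvable over the tower $K_0\subset K_1\subset\cdots$; solvability is exactly what would let a late reducibility be pushed down to level $\le o_f$ (resp.\ $\le o_f+1$). Over $\mathbb{Q}$ this boundary analysis is precisely the computation carried out by hand in Lemmas $3.9$ and $3.10$: it reduces to the Pell equation $2r^2-t^2=2$ and to the equation $2p^4-1=t^2$, and the set $S$ is the unresolved residue of the latter (cf.\ Conjecture $3.12$). Over $\mathbb{Q}$, hence over $\mathbb{Z}$, only $m\in\{0,-1,-2\}$ occurs, so $t_f\le1$ and parts (i)--(ii) then follow from Proposition $3.5$, Theorem $4.1$, and Lemma $3.10$ together with Conjecture $3.12$; but over a general field of characteristic $\ne2$ the case $t_f\ge2$, where a transient block of length $t_f$ must be cleared, seems to need genuinely new Diophantine input, and I expect that to be the crux.
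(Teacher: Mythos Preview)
The statement you are addressing is Conjecture $4.3$, which the paper explicitly leaves open; there is no proof in the paper to compare against. The supporting evidence the paper offers consists of Theorem $4.1$ (the case $t_f=1$), the finite-field analogue sketched in Remark $4.4$, and the three integer families handled in Section $3$.

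Your proposal is not a proof of the conjecture, and you appear to recognize this. You correctly isolate the pure periodicity of $\{g_i\}_{i\ge0}$ as the engine behind Theorem $4.1$, correctly observe that pure periodicity fails exactly when $t_f\ne1$ (a single transient index $r=0$ when $t_f=0$ and $m\ne0$; a block $r=0,\dots,t_f-1$ when $t_f\ge2$), and correctly flag the boundary instances of the Theorem $2.7$ relations at those transient indices as the obstruction to pushing a late reducibility down to level at most $o_f$ (resp.\ $o_f+1$). Your reduction of the $\mathbb{Q}$ case to the results of Section $3$ is accurate in outline but still conditional: you yourself invoke Conjecture $3.12$ to dispose of the three values in $S$, so even over $\mathbb{Q}$ the statement is not fully established. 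Over a general field of characteristic $\ne2$ with $t_f=0$ or $t_f\ge2$, nothing in the paper and nothing in your argument closes the gap; your final sentence concedes as much.

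In short, your write-up is a sound heuristic and a correct diagnosis of why the method of Theorem $4.1$ stalls, and it matches the paper's own framing of the question, but it does not constitute a proof, and the paper does not claim one.
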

	
	\begin{remark}
		The exact analogue of Conjecture 4.4 already holds over finite fields of odd characteristic, and it was indeed part of our motivation to state this conjecture. Namely, let $\mathbb{F}_q$ be the finite field of odd size $q$, and $f(x)\in \mathbb{F}_q [x]$ be a monic quadratic polynomial with the critical point $c$. Then for any $n\geq 2$, using (\cite{JonesBoston}, Lemma $2.5$), $f$ is stable if and only if the elements of the sequence $$\{-f(c), f^2(c), \dots, f^{o_f}(c), f^{o_f+1}(c)\}\subseteq \mathbb{F}_q$$ are all non-squares in $\mathbb{F}_q$. If $t_f=0$, we have $f^{o_f+1}(c) \neq -f(c)$ and $f^{o_f+1}(c) \neq f^{i}(c)$ for all $2\leq i\leq o_f$. Then (\cite{JonesBoston}, Lemma $2.5$) directly implies that $f$ is stable if and only if $f^{o_f+1}$ is irreducible. If $t_f>0$, then $f^{o_f+1}(c) = -f(c)$ or $f^{o_f+1}(c) = f^i(c)$ for some $2\leq i\leq o_f$. Then (\cite{JonesBoston}, Lemma $2.5$) again implies that $f$ is stable if and only if $f^{o_k}$ is irreducible.
	\end{remark}
     \subsection*{Acknowledgments}
     The author would like to thank Nigel Boston and Rafe Jones for their very helpful comments on this work. The author also thanks the anonymous referee for their careful reading of the manuscript, and many helpful suggestions.
	
		
		
		
		\baselineskip=17pt



\begin{thebibliography}{9}
		
		\bibitem[1]{AyadMcdonald}
		Mohamed Ayad and Donald L. McQuillan, Irreducibility  of  the  iterates of  a  quadratic  polynomial over a field, Acta Arith. 93 (2000), no. 1, 87–97.
		
		\bibitem[2]{AyadMcdonald2}
		Mohamed Ayad and Donald L. McQuillan. Corrections to: “Irreducibility of the iterates of a quadratic polynomial over a field” [Acta Arith. 93 (2000), no. 1, 87–97]. Acta Arith. , 99(1):97, 2001.
		
		\bibitem[3]{Brandl}
		Rolf Brandl. Integer polynomials that are reducible modulo all primes. Amer. Math. Monthly, 93(4):286–288, 1986.
		
		\bibitem[4]{dd}
		Lynda Danielson and Burton Fein. On the irreducibility of the iterates of $x^n-b$. Proc. Amer. Math. Soc., 130(6):1589–1596 (electronic), 2002.
		
		\bibitem[5]{Guralnick}
		Robert Guralnick, Murray M. Schacher, and Jack Sonn. Irreducible polynomials which are locally reducible everywhere. Proc. Amer. Math. Soc., 133(11):3171–3177 (electronic), 2005.
		
		\bibitem[6]{Jones}
		Rafe Jones. An iterative construction of irreducible polynomials reducible modulo every prime. J. Algebra , 369:114–128, 2012.
		
		\bibitem[7]{JonesBoston}
		Rafe Jones and Nigel Boston,
		Settled Polynomials over Finite Fields,
		Proc. Amer. Math. Soc. 140 (2012), no. 6, 1849–1863.
		
		\bibitem[8]{Karp}
		G.Karpilovsky, Topics in Field Theory, North-Holland, Amsterdam, 1989.
		
		\bibitem[9]{Ljunggren}
		W. Ljunggren, Zur Theorie der Gleichung $X^2+1 = DY^4$, Avh. Norske Vid. Akad. Oslo I. Mat.-Naturv. 1942 (5), 27 pp.
		
		\bibitem[10]{Ostefa}
		Alina Ostafe and Igor E. Shparlinski. On the length of critical orbits of stable quadratic polynomials. Proc. Amer. Math. Soc., 138(8):2653–2656, 2010.
		
		
		
		
		
	\end{thebibliography}
\end{document}